\theoremstyle{plain}
\newtheorem{theorem}{Theorem}[section]
\newtheorem{corollary}[theorem]{Corollary}
\newtheorem{proposition}[theorem]{Proposition}
\theoremstyle{definition}
\newtheorem{definition}[theorem]{Definition}
\newtheorem{problem}[theorem]{Problem}
\newtheorem{example}[theorem]{Example}
\theoremstyle{remark}
\newtheorem*{remark}{Remark}
\newcommand{\cI}{\mathcal{I}} 
\newcommand{\I}{\cI} 
\newcommand{\cJ}{\mathcal{J}} 
\newcommand{\J}{\cJ} 
\newcommand{\eu}{\mathcal{EU}}
\DeclareMathOperator{\exh}{Exh}
\DeclareMathOperator{\fin}{Fin}
\DeclareMathOperator{\fix}{Fix}
\newcommand{\R}{\mathbb{R}} 
\newcommand{\cP}{\mathcal{P}} 
\renewcommand{\subset}{\subseteq}
\begin{document}

\title{Homogeneous ideals on countable sets}

\author{Adam Kwela}
\address{Institute of Mathematics, Faculty of Mathematics, Physics and Informatics, University of Gda\'{n}sk, ul.~Wita Stwosza 57, 80-308 Gda\'{n}sk, Poland}
\email{adam.kwela@ug.edu.pl}

\author{Jacek Tryba}
\address{Institute of Mathematics, Faculty of Mathematics, Physics and Informatics, University of Gda\'{n}sk, ul.~Wita Stwosza 57, 80-308 Gda\'{n}sk, Poland}
\email{jtryba@mat.ug.edu.pl}

\date{\today}

\subjclass[2010]{Primary:
03E05. 
Secondary:
26A03, 
40A35, 
54A20. 
}

\thanks{The first author was supported by the grant BW-538-5100-B298-16.}

\keywords{
Ideal, filter, ideal convergence, homogeneous ideal, anti-homogeneous ideal, invariant injection, bi-invariant injection.}

\begin{abstract}
We say that an ideal $\I$ on $\omega$ is homogeneous, if its restriction to any $\I$-positive subset of $\omega$ is isomorphic to $\I$. The paper investigates basic properties of this notion -- we give examples of homogeneous ideals and present some applications to topology and ideal convergence. Moreover, we answer questions related to our research posed in \cite{Inv}.
\end{abstract}

\maketitle

\section{Introduction}

Let $\omega$ stand for the set $\{0,1,2,\ldots\}$. A collection $\mathcal{I}\subset\mathcal{P}(X)$ is an \emph{ideal on $X$} if it is closed under finite unions and subsets. We additionally assume that $\mathcal{P}(X)$ is not an ideal and each ideal contains the family of all finite subsets of $X$. In this paper $X$ will always be a countable set. $\fin$ is the ideal of all finite subsets of $\omega$. The \emph{restriction of the ideal $\I$ to $X\subseteq\bigcup\I$} is given by $\I|X=\{A\cap X:\ A\in\I\}$. Ideal is \emph{dense} if every infinite set contains an infinite subset belonging to the ideal. The \emph{filter dual to the ideal $\mathcal{I}$} is the collection $\mathcal{I}^*=\left\{A\subset X:A^c\in\mathcal{I}\right\}$ and $\mathcal{I}^+=\left\{A\subset X:A\notin\mathcal{I}\right\}$ is the collection of all \emph{$\mathcal{I}$-positive sets}. An ideal $\I$ on $\omega$ is \emph{maximal} if for any $A\subseteq\omega$ either $A\in\I$ or $\omega\setminus A\in\I$ (equivalently: $\I$ is maximal with respect to $\subseteq$). 

Ideals $\mathcal{I}$ and $\mathcal{J}$ are \emph{isomorphic} ($\I\cong\J$) if there is a bijection $f\colon\bigcup\mathcal{J}\to\bigcup\mathcal{I}$ such that 
$$A\in\mathcal{I} \Longleftrightarrow f^{-1}[A]\in\mathcal{J}.$$

In this paper, for a given ideal $\I$, we investigate family of sets $H(\I)$ such that the restrictions of $\I$ to members of this family are isomorphic to $\I$.

\begin{definition}
Let $\I$ be an ideal on $\omega$. Then
$$H(\I)=\left\{A\subseteq\omega:\ \mathcal{I}|A\cong \mathcal{I}\right\}$$
is called \emph{the homogeneity family of the ideal $\mathcal{I}$}. 
\end{definition}

We call an ideal \emph{admissible} if it is not isomorphic to $$\fin\oplus\mathcal{P}(\omega)=\{A\subseteq\{0,1\}\times\omega:\ \{n\in\omega:\ (1,n)\in A\}\in\mathbf{Fin}\}.$$

\begin{proposition}\
\label{zawierania}
\begin{itemize}
	\item[(a)]$H(\I)\subseteq\I^+$ for any ideal $\I$.
	\item[(b)]$H(\I)\supseteq\I^\star$ for any admissible ideal $\I$.
\end{itemize}
\end{proposition}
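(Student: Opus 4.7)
Part (a) is immediate from the paper's convention that a full power set is not an ideal: if $A \in H(\I)$ then $\I|A$ must itself be an ideal, but $A \in \I$ would force $\I|A = \mathcal{P}(A)$, which is excluded. Hence $A \in \I^+$.

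For part (b), fix $A \in \I^\star$ and set $E = \omega \setminus A \in \I$. The goal is to construct a bijection $f\colon A \to \omega$ such that $C \in \I \iff f[C] \in \I$ for every $C \subseteq A$; this is precisely the unfolding of $\I|A \cong \I$. The plan rests on the following lemma, where admissibility is used in an essential way: \emph{either $\I = \fin$, or some infinite $B \subseteq A$ lies in $\I$.} To establish it, suppose neither alternative holds; then every infinite subset of $A$ is $\I$-positive, so $\I \cap \mathcal{P}(A)$ consists exactly of the finite subsets of $A$. Since $E \in \I$ also gives $\mathcal{P}(E) \subseteq \I$, a direct computation yields $C \in \I \iff C \cap A$ is finite for all $C \subseteq \omega$. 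If $E$ is infinite, any bijection $\omega \to E \cup A$ sending the two copies of $\omega$ in $\{0,1\} \times \omega$ onto $E$ and $A$ identifies $\I$ with $\fin \oplus \mathcal{P}(\omega)$, contradicting admissibility; if $E$ is finite, the description collapses to $\I = \fin$, contradicting the other alternative.

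Given the lemma, the case $\I = \fin$ is trivial: $A$ is cofinite and any bijection $A \to \omega$ preserves finiteness. Otherwise, pick an infinite $B \subseteq A$ with $B \in \I$, set $I = B \cup E \in \I$, fix a bijection $g\colon B \to I$, and define $f\colon A \to \omega$ to agree with $g$ on $B$ and with the identity on $A \setminus B$. Since $A \setminus B = \omega \setminus I$, this is a bijection. For $C \subseteq A$, decompose $C = (C \cap B) \cup (C \setminus B)$, so that $f[C] = g[C \cap B] \cup (C \setminus B)$. The first summand on each side lies in $\I$ (as a subset of $B$, respectively of $I$), hence $C \in \I \iff C \setminus B \in \I \iff f[C] \in \I$.

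The only genuine obstacle is the lemma, which is exactly where admissibility is invoked to rule out the pathological decomposition $\I \cong \fin \oplus \mathcal{P}(\omega)$. Everything else is a standard absorption argument that hides the ideal set $E$ inside the slightly larger ideal set $I = B \cup E$.
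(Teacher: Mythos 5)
Your proof is correct and follows essentially the same route as the paper: part (a) via $\I|A=\mathcal{P}(A)$, and part (b) by absorbing $\omega\setminus A$ into an infinite $B\subseteq A$ with $B\in\I$ (your $f\colon A\to\omega$ is just the inverse of the paper's bijection $\omega\to A$). The only difference is that you spell out the verification, left as a parenthetical in the paper, that admissibility together with $\I\neq\fin$ yields such a $B$.
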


\begin{proof}
To prove part (a) consider any $A\in\I$. Then $\I|A=\mathcal{P}(A)$. Therefore, $\I|A\not \cong\I$.

To prove part (b) consider any $A\in\I^\star$. If $\I\cong \fin$, then obviously $\I|A\cong\I$. So suppose now that $\I\not \cong \fin$. Then there is an infinite $B\subseteq A$ with $B\in\I$ (since $\I$ is not isomorphic to $\fin\oplus\mathcal{P}(\omega)$). Let $f\colon\omega\to A$ be such that $f(x)=x$ for any $x\in A\setminus B$ and $f\upharpoonright (B\cup A^c)$ is any bijection between $B\cup A^c$ and $B$. Then $f$ witnesses that $\mathcal{I}|A\cong \mathcal{I}$.
\end{proof}

The above proposition indicates that we can introduce two classes of ideals. Members of those classes have critical homogeneity families.

\begin{definition}
We call an ideal $\mathcal{I}$ on $\omega$:
\begin{itemize}
	\item \emph{homogeneous}, if $H(\I)=\I^+$;
	\item \emph{anti-homogeneous}, if $H(\I)=\I^\star$.
\end{itemize}
\end{definition}

\begin{remark}
In \cite[Section 5]{Fremlin} the notion of homogeneous filters is introduced. Note that an ideal is homogeneous if and only if its dual filter is homogeneous. 
\end{remark}

\begin{example}
The only ideals that are both homogeneous and anti-homogeneous are maximal ideals.
\end{example}

The space $2^\omega$ of all functions $f:\omega\rightarrow 2$ is equipped with the product topology (each space $2=\left\{0,1\right\}$ carries the discrete topology). We treat $\mathcal{P}(\omega)$ as the space $2^\omega$ by identifying subsets of $\omega$ with their characteristic functions. All topological and descriptive notion in the context of ideals on $\omega$ will refer to this topology.

In Section \ref{homogeneity} we give more examples of homogeneous ideals. Examples of anti-homogeneous ideals can be found in Section \ref{maximal}, however, all of them are based on maximal ideals and hence are not Borel or even analytic. We give an example of an $\mathcal{F}_{\sigma\delta}$ anti-homogeneous ideal in Section \ref{submeasures}.

In this paper we answer some questions posed in \cite{Inv}, where $\I$-invariant and bi-$\I$-invariant injections were investigated.

\begin{definition}
Let $\I$ be an ideal on $\omega$ and $f\colon\omega\to\omega$ be an injection. We say that $f$ is:
\begin{itemize}
	\item \emph{$\I$-invariant} if $f[A]\in\I$ for all $A\in \I$;
	\item \emph{bi-$\I$-invariant} if $f[A]\in\I \Longleftrightarrow A\in \I$ for all $A\subseteq\omega$.
\end{itemize}
\end{definition}

Next example shows that invariance and bi-invariance of an injection does not have to coincide.

\begin{example}
The \emph{ideal of sets of asymptotic density zero} is given by 
$$\mathcal{I}_d=\{A\subset\omega:\ \lim_{n\rightarrow\infty}\frac{|A\cap n|}{n}=0\}.$$ 
Note that every increasing injection is $\I_d$-invariant. In particular, $f\colon\omega\to\omega$ given by $f(n)=n^2$ is invariant. However, it is not bi-$\I_d$-invariant, since $f[\omega]\in\I_d$.
\end{example}

We say that \emph{$\mathcal{J}$ contains an isomorphic copy of $\mathcal{I}$} and write $\mathcal{I}\sqsubseteq\mathcal{J}$ if there is a bijection $f\colon\bigcup\mathcal{J}\to\bigcup\mathcal{I}$ such that
$$A\in\mathcal{I}\Longrightarrow f^{-1}[A]\in\mathcal{J}.$$
Note that $\I\cong\J$ implies $\mathcal{I}\sqsubseteq\mathcal{J}$, however, both $\mathcal{I}\sqsubseteq\mathcal{J}$ and $\mathcal{J}\sqsubseteq\mathcal{I}$ do not imply that $\I\cong\J$ (cf. \cite[Section 1]{K}).

The connection between invariant injections and homogeneity families is the following.

\begin{remark}\
\begin{itemize}
	\item If $f\colon\omega\to\omega$ is bi-$\I$-invariant then $f[\omega]\in H(\I)$. On the other hand, if $A\in H(\I)$ then there is a bi-$\I$-invariant $f\colon\omega\to\omega$ with $f[\omega]=A$.
	\item If $f\colon\omega\to\omega$ is $\I$-invariant then $\I\sqsubseteq\I|f[\omega]$. On the other hand, if $\I\sqsubseteq\I|A$ then there is an $\I$-invariant $f\colon\omega\to\omega$ with $f[\omega]=A$.
\end{itemize}
\end{remark}

The paper is organized as follows. In Section \ref{homogeneity} we investigate homogeneity families and homogeneous ideals. Section \ref{maximal} is devoted to answering \cite[Question 1]{Inv}. In Section \ref{submeasures} we investigate ideals induced by submeasures (summable ideals and Erd{\"o}s-Ulam ideals). In particular, we answer \cite[Question 2]{Inv}. The last part of our paper concerns applications of our results to ideal convergence. We answer \cite[Questions 3 and 4]{Inv} related to this topic.

\section{Homogeneous ideals}
\label{homogeneity}

In this section we investigate basic properties of homogeneity families, give some examples of homogeneous ideals and show an application to topology.

The next theorem enables us to simplify computations of homogeneity families of ideals.

\begin{theorem}
\label{supersets}
The homogeneity family of any ideal is closed under supersets.
\end{theorem}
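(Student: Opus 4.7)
The plan is to produce a bijection witnessing $\I|B \cong \I$, starting from a bijection $f : A \to \omega$ that witnesses $\I|A \cong \I$ and writing $C := B \setminus A$. First I would dispense with the non-admissible case: if $\I \cong \fin \oplus \mathcal{P}(\omega)$, the conclusion can be read off directly from the explicit description of $H(\I)$. So assume $\I$ is admissible, and observe that $\I|B$ is then also admissible (otherwise the restriction $(\I|B)|A = \I|A$ would be forced to be isomorphic to $\fin$, so $\I \cong \fin$, contradicting $\I|B \not\cong \fin \oplus \mathcal{P}(\omega)$).

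The argument then splits according to whether $C \in \I$ or $C \in \I^+$. When $C \in \I$ it is short: $A \in (\I|B)^\star$, so Proposition~\ref{zawierania}(b) applied to the admissible ideal $\I|B$ gives $A \in H(\I|B)$, i.e.\ $\I|A = (\I|B)|A \cong \I|B$; combined with $\I|A \cong \I$ this gives $\I|B \cong \I$, so $B \in H(\I)$.

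The interesting case is $C \in \I^+$. Here I would try to build a bijection $h : B \to A$ realizing $\I|B \cong \I|A$ and then compose with $f$. Set $D := f^{-1}[C] \subseteq A$, so that $f|_D$ provides an isomorphism $\I|D \cong \I|C$. Let $h$ be the identity on $A \setminus D$ and any bijection $D \cup C \to D$ on the remainder; then $h$ is ideal-preserving iff the bijection on $D \cup C$ is an isomorphism $\I|(D \cup C) \to \I|D$. Since $\I|(D \cup C) = \I|D \oplus \I|C \cong \I|C \oplus \I|C$, the entire question reduces to the self-absorption property $\I|C \oplus \I|C \cong \I|C$.

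The main obstacle is verifying this last property for arbitrary $\I$; it is not true for every ideal (for instance, it fails for maximal ideals). My expectation is that it must hold under our hypotheses because $A \in H(\I)$ together with $A \cap C = \emptyset$ forces, via the decomposition $\omega = A \sqcup (\omega \setminus A)$, the isomorphism $\I \cong \I \oplus \I|(\omega \setminus A)$, which controls the structure of $\I|C = \I|(\omega \setminus A)|C$ enough to rule out the pathological configurations. Carrying out this last step cleanly is where I expect most of the work to lie.
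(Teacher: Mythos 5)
Your preliminary reductions are sound: the explicit computation of $H(\fin\oplus\mathcal{P}(\omega))$, the observation that $\I|B$ is again admissible, and the case $B\setminus A\in\I$ (via Proposition~\ref{zawierania}(b) applied to $\I|B$) all go through. The problem is the main case $C=B\setminus A\in\I^+$, which you reduce to the self-absorption property $\I|C\oplus\I|C\cong\I|C$ and then leave open. This is not a gap that can be filled, because that property is false under your hypotheses. Take a maximal ideal $\J$, let $\mathcal{K}=\{X\subseteq\omega\times\omega:\ X_n\in\J\text{ for all }n\in\omega\}$, and put $\I=\mathcal{K}\oplus\J$ (this $\I$ is dense, hence admissible). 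Then $A=\omega\times\omega$ satisfies $\I|A=\mathcal{K}\cong\mathcal{K}\oplus\J=\I$ (send the extra copy of $\omega$ to column $0$ and shift the columns by one), so $A\in H(\I)$; the set $C$ carrying the second summand is $\I$-positive with $\I|C=\J$; but $\J\oplus\J\not\cong\J$, since $\J\oplus\J$ is not maximal while any ideal isomorphic to $\J$ is. Hence no bijection $B\to A$ that is the identity on $A\setminus D$ can preserve the ideal here, even though the conclusion of the theorem for $B=A\cup C$ holds trivially. Your hoped-for rescue, $\I\cong\I\oplus\I|(\omega\setminus A)$, is true in this example ($\mathcal{K}\oplus\J\cong\mathcal{K}\oplus\J\oplus\J$) but does not imply $\I|C\oplus\I|C\cong\I|C$.

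The structural reason the plan fails is that it tries to absorb $C$ into $D$ in a single step while fixing $A\setminus D$ pointwise, and this forces exactly the false self-absorption statement. The paper instead spreads the absorption along infinitely many steps: writing the witnessing isomorphism as $f\colon\omega\to A$, it lets $M$ be the union of $B\setminus A$ with all of its forward iterates under $f$ (these stay inside $A$), and defines $\varphi$ to be the identity on $M$ and equal to $f$ off $M$. A short orbit argument shows $\varphi$ is a bijection from $\omega$ onto $B$, and ideal-preservation is then automatic: $\varphi[X]=(X\cap M)\cup f[X\setminus M]$ and $\varphi^{-1}[X]=(X\cap M)\cup f^{-1}[X\setminus M]$ are finite unions of sets in $\I$ whenever $X\in\I$. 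No isomorphism between $\I|C\oplus\I|C$ and $\I|C$ is ever needed.
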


\begin{proof}
Take any $A\in H(\I)$ and $A\subseteq B$. There is a bijection $f\colon\omega\to A$ such that $X\in\I \Leftrightarrow f[X]\in\I$. Denote
$$A'=\left\{a\in A:\ \exists_{n\in\omega}\left(\forall_{k<n}\ f^{-k}(a)\in A\right)\ \wedge\ f^{-n}(a)\in B\setminus A\right\}$$
and $M=A'\cup (B\setminus A)$. Define $\varphi\colon\omega\to B$ by:
$$\varphi(x)=\left\{\begin{array}{ll}
f(x), & \textrm{if } x\in \omega\setminus M,\\
x, & \textrm{if } x\in M.
\end{array}\right.$$
We will show that $\varphi$ is a bijection witnessing that $\I|B\cong\I$.

Firstly we show that $\varphi$ is $1-1$. Take $x,y\in\omega$, $x\neq y$. If $x,y\in M$ then $\varphi(x)=x\neq y=\varphi(y)$. If $x,y\notin M$, then $\varphi(x)=f(x)\neq f(y)=\varphi(y)$ since $f$ is $1-1$. So it remains to consider the case that $x\in M$ and $y\notin M$. Then $\varphi(x)=x$ and $\varphi(y)=f(y)$. Suppose that $f(y)=x$. Since $x\in M$, we have $x\in A'$ (the case $x\in B\setminus A$ is impossible by $f(y)\in f[\omega]=A$). Observe that $f^{-1}(x)=y\notin B\setminus A$ (since $y\notin M$). Therefore, $y\in A$. But then $y\in A'\subseteq M$. A contradiction.

Now we prove that $\varphi$ is onto. Let $y\in B$. If $y\in M$, then $\varphi(y)=y$ and we are done. So suppose that $y\notin M$ and observe that $f^{-1}(y)\notin M$. Indeed, otherwise either $f^{-1}(y)\in B\setminus A$ or $f^{-1}(y)\in A'$. In both cases we get that $y\in A'$ which contradicts $y\notin M$. Therefore, $f^{-1}(y)\notin M$ and we have $y=f(f^{-1}(y))=\varphi(f^{-1}(y))$.

Finally, we show that $\varphi$ witnesses $\I|B\cong\I$. Take any $X\in\I$. We have
$$\varphi[X]=\varphi[X\cap M]\cup\varphi[X\setminus M]=(X\cap M)\cup f[X\setminus M]\in \I,$$
$$\varphi^{-1}[X]=\varphi^{-1}[X\cap M]\cup\varphi^{-1}[X\setminus M]=(X\cap M)\cup f^{-1}[X\setminus M]\in \I$$
(since $f$ witnesses $\I|A\cong\I$). This finishes the proof.
\end{proof}

\begin{corollary}
\label{homogeneous}
The following are equivalent for any ideal $\I$ on $\omega$:
\begin{itemize}
	\item[(a)] $\I$ is homogeneous;
	\item[(b)] for each $B\notin\I$ there is $A\subseteq B$ such that $A\in H(\I)$.
\end{itemize}
\end{corollary}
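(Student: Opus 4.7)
The corollary is a direct consequence of Theorem \ref{supersets} combined with Proposition \ref{zawierania}(a), so the proof will be short and split into the two implications.

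For the implication (a) $\Rightarrow$ (b), I observe that if $\I$ is homogeneous then $H(\I)=\I^+$, so for any $B\notin\I$ the set $A=B$ itself witnesses (b).

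For the reverse implication (b) $\Rightarrow$ (a), the plan is to show both inclusions of $H(\I)=\I^+$. The inclusion $H(\I)\subseteq\I^+$ is given for free by Proposition \ref{zawierania}(a) and requires no work. For the nontrivial inclusion $\I^+\subseteq H(\I)$, I would take an arbitrary $B\in\I^+$, apply assumption (b) to obtain some $A\subseteq B$ with $A\in H(\I)$, and then invoke Theorem \ref{supersets} to conclude that every superset of $A$ (and in particular $B$ itself) belongs to $H(\I)$.

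There is no real obstacle here: the entire content of the corollary is concentrated in Theorem \ref{supersets}, which has already been established. The role of the corollary is essentially to repackage that closure-under-supersets property into a practical criterion for homogeneity, since verifying (b) only requires producing \emph{one} isomorphic copy inside each positive set rather than checking all positive sets individually.
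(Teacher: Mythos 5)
Your proof is correct and follows essentially the same route as the paper: (a) $\Rightarrow$ (b) is immediate, and (b) $\Rightarrow$ (a) is Theorem \ref{supersets} applied to the set $A$ furnished by (b). Your explicit appeal to Proposition \ref{zawierania}(a) for the inclusion $H(\I)\subseteq\I^+$ is a harmless (and slightly more careful) addition to what the paper leaves implicit.
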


\begin{proof}
\textbf{(a) $\Rightarrow$ (b): }Obvious.

\textbf{(b) $\Rightarrow$ (a): }Take any $B\notin\I$. By condition (b) there is $A\subseteq B$ such that $A\in H(\I)$. Then, by Theorem \ref{supersets} we have $B\in H(\I)$.
\end{proof}

Now we will give some examples of homogeneous ideals. 

\begin{example}
\label{Fin}
The ideal $\fin$ is the simplest example of a homogeneous ideal.
\end{example}

\begin{example}
Define $D=\{(i,j)\in\omega^2:\ i\geq j\}$ and 
$$\mathcal{ED}_{fin}=\left\{A\subseteq D:\ \exists_{m\in\omega}\forall_{k\in\omega} |A\cap\{k\}\times\omega|\leq m\right\}$$
(cf. \cite{Meza}). Then $\mathcal{ED}_{fin}$ is a homogeneous ideal. Indeed, set any $A\notin\mathcal{ED}_{fin}$ and for each $n\in\omega$ pick $k_n\in\omega$ and $a^n_1,\ldots,a^n_{n}\in A\cap\{k_n\}\times\omega$. We can additionally assume that the sequence $(k_n)_{n\in\omega}$ is increasing. Denote $A'=\{a^n_m:\ n\in\omega,m\leq n\}$. Then the bijection $f\colon D\to A'$ given by $f(n,m)=a^n_m$ witnesses that $\mathcal{ED}_{fin}\cong\mathcal{ED}_{fin}|A'$ and by Corollary \ref{homogeneous} we get that $\mathcal{ED}_{fin}$ is homogeneous.
\end{example}

\begin{example}
Given $n\in\omega$ by $[\omega]^n$ we denote the collection of all $n$-element subsets of $\omega$. For all $n\in\omega\setminus\{0\}$ define
$$\mathcal{R}_n=\left\{A\subseteq [\omega]^n:\ \forall_{B\subseteq\omega} \textrm{ if }B\textrm{ is infinite then }A\textrm{ does not contain }[B]^n\right\}$$
(note that $\mathcal{R}_1=\fin$). By Ramsey's Theorem each $\mathcal{R}_n$ is an ideal. Using Corollary \ref{homogeneous} it is easy to see that all $\mathcal{R}_n$ are homogeneous.
\end{example}

\begin{example}
Define the van der Waerden ideal
$$\mathcal{W}=\left\{A\subseteq\omega:\ \exists_{n\in\omega} A\textrm{ does not contain an arithmetic progression of length }n\right\}$$
and the Hindman ideal
$$\mathcal{H}=\left\{A\subseteq\omega:\ \forall_{B\subseteq\omega} \textrm{ if }B\textrm{ is infinite then }FS(B)\not\subseteq A\right\},$$
where $FS(B)=\left\{\sum_{n\in F}n:\ F\subseteq B\ \wedge\ F\in\fin\right\}$ (note that $\mathcal{W}$ is closed under finite unions by the van der Waerden's Theorem and $\mathcal{H}$ is closed under finite unions by the Hindman's Theorem). Both $\mathcal{W}$ and $\mathcal{H}$ are homogeneous. Indeed, by the proofs of \cite[Proposition 4]{vdW} and \cite[Theorems 3.3 and 4.5]{FT} both ideals satisfy condition (b) of Corollary \ref{homogeneous}.
\end{example}

\begin{example}
For every $n\in\omega\setminus\{0\}$ define the Gallai ideal
$$\mathcal{G}_n=\left\{A\subseteq\omega^n:\ \exists_{k\in\omega}\forall_{v\in\omega^n}\forall_{\alpha\in\omega\setminus\{0\}}\ v+\alpha\cdot\{1,2,\ldots,k\}^n\not\subseteq A\right\}$$
(note that each $\mathcal{G}_n$ is closed under finite unions by the Gallai's Theorem). In particular, $\mathcal{G}_1=\mathcal{W}$. It can be shown that each $\mathcal{G}_n$ is homogeneous. We provide only a sketch of the proof for $n=2$.

Let $A\notin\mathcal{G}_2$ and fix any bijection $h\colon\omega^2\to\omega$. Let $\textrm{pr}_1$ denote the projection on the first coordinate. Construct inductively $v^i_j\in\omega^2$ and $\alpha^i_j\in\omega\setminus\{0\}$ for all $i,j\in\omega$ such that:
\begin{itemize}
	\item[(i)] $A^i_j=v^i_j+\alpha^i_j\cdot\{1,2,\ldots,2^{i}\}^2\subseteq A$;
	\item[(ii)] $2\cdot\max (\textrm{pr}_1(A^i_j))<\min (\textrm{pr}_1(A^{i'}_{j'}))$ whenever $h(i,j)<h(i',j')$.
\end{itemize}
The construction is possible since given some $l\in\omega$ we have $\{l\}\times\omega\in\mathcal{G}_2$ and hence $A\setminus\left(\{0,1,\ldots,l\}\times\omega\right)\notin\mathcal{G}_2$. 

Let $A'=\bigcup_{i,j\in\omega}A^i_j\subseteq A$. By Corollary \ref{homogeneous} it suffices to show that $A'\in H(\mathcal{G}_2)$. 

We are ready to define the required isomorphism $f\colon A'\to\omega^2$. Given $(a,b)\in A^i_j$ find $k,l$ such that $(a,b)=v^i_j+\alpha^i_j\cdot (k,l)$. Then define $f(a,b)=(k+\sum_{n<i}2^n,l+2^i\cdot j)$.

\begin{figure}[h]
\begin{center}
\includegraphics[scale=0.30]{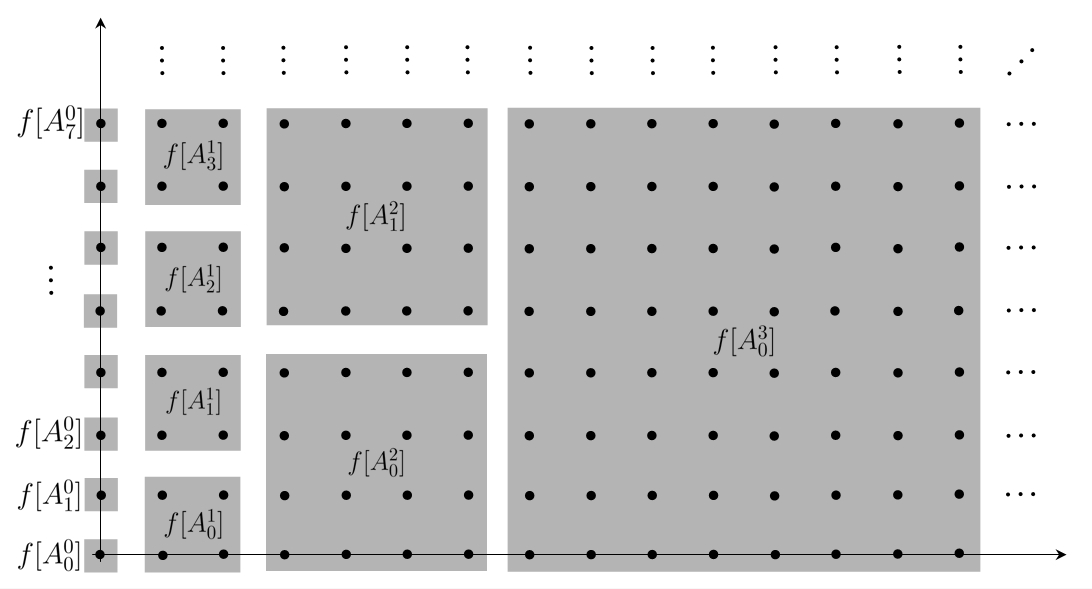}
\end{center}
\caption{Illustration of the image of function $f$.}
\end{figure}

To prove that $f$ witnesses $A'\in H(\mathcal{G}_2)$, set any $B\notin\mathcal{G}_2$. Assume first that $B\subseteq A'$. It can be shown that by condition (ii) actually for each $k$ there are $v$, $\alpha$ and $i$, $j$ such that $v+\alpha\cdot\{1,2,\ldots,k\}^2\subseteq A^i_j\cap B$. Since in this case $f[v+\alpha\cdot\{1,2,\ldots,k\}^2]$ is also of the form $v'+\alpha'\cdot\{1,2,\ldots,k\}^2$ for some $v'$ and $\alpha'$, we conclude that $f[B]\notin \mathcal{G}_2$.

To finish the proof we need to check that $f^{-1}[B]$ is not in $\mathcal{G}_2$ for every $B\notin\mathcal{G}_2$. Actually, it is enough to show that for every $v$ and $\alpha$ the set $f^{-1}[v+\alpha\cdot\{1,2,\ldots,8k\}^2]$ contains $v'+\alpha'\cdot\{1,2,\ldots,k\}^2$ for some $v'$ and $\alpha'$. Indeed, consider the set $\textrm{pr}_1[v+\alpha\cdot\{1,2,\ldots,8k\}^2]$. It is an arithmetic progression of length $8k$ and common difference $\alpha$. It can be shown that in this case there is $i$ such that $I_i\cap \textrm{pr}_1[v+\alpha\cdot\{1,2,\ldots,8k\}^2]$, where $I_i=[\sum_{n<i}2^n,(\sum_{n<i}2^n)+2^i-1]$, contains an arithmetic progression of length $2k$ and common difference $\alpha$ (cf. the proofs of \cite[Proposition 4]{vdW} and \cite[Theorems 3.3]{FT}). Therefore, there is $w$ such that $w+\alpha\cdot\{1,2,\ldots,2k\}^2\subseteq v+\alpha\cdot\{1,2,\ldots,8k\}^2$ and $\textrm{pr}_1[w+\alpha\cdot\{1,2,\ldots,2k\}^2]\subseteq I_i$. Now it suffices to observe that there are also $w'$ and $j$ with $w'+\alpha\cdot\{1,2,\ldots,k\}^2\subseteq I_i\times [j\cdot 2^i,(j+1)\cdot 2^i]$. It follows that $f^{-1}[v+\alpha\cdot\{1,2,\ldots,8k\}^2]$ contains $v'+\alpha'\cdot\{1,2,\ldots,k\}^2$ for some $v'$ and $\alpha'$.
\end{example}

The above examples suggest that the following should be true.

\begin{problem}
Define the Folkman ideal
$$\mathcal{F}=\left\{A\subseteq\omega:\ \exists_{n>1}\forall_{B\subseteq\omega} \textrm{ if }|B|=n \textrm{ then } FS(B)\not\subseteq A\right\}$$
(note that $\mathcal{F}$ is closed under finite unions by the Folkman's Theorem). Is $\mathcal{F}$ homogeneous? 
\end{problem}

Let $\I$ and $\J$ be two ideals on $X$ and $Y$, respectively. By $\mathcal{I}\otimes\mathcal{J}$ we denote the \emph{product of the ideals $\I$ and $\J$} given by:
$$A\in\mathcal{I}\otimes\mathcal{J}\ \Leftrightarrow\ \{x\in X:\ A_x\notin\mathcal{J}\}\in\mathcal{I},$$
for every $A\subseteq X\times Y$, where $A_x=\{y\in Y:\ (x,y)\in A\}$.

\begin{remark}
Observe that $\mathcal{G}_2\neq\mathcal{W}\otimes\mathcal{W}$, since the set $D=\{(i,j)\in\omega^2:\ i\geq j\}$ belongs to $\mathcal{W}\otimes\mathcal{W}$ and does not belong to $\mathcal{G}_2$ (in fact, one can also give an example of a set belonging to $\mathcal{G}_2\setminus\mathcal{W}\otimes\mathcal{W}$).
\end{remark}

The next result gives us many more examples of homogeneous ideals.

\begin{proposition}
\label{otimes}
If $\I$ and $\J$ are homogeneous ideals then so is $\I\otimes\J$.
\end{proposition}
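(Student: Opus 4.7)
By Corollary \ref{homogeneous}, it suffices to show that for every $B\notin\I\otimes\J$ there is some $A\subseteq B$ with $A\in H(\I\otimes\J)$. My plan is to build such an $A$ directly by ``patching together'' an isomorphism on the $x$-axis with isomorphisms on each non-small fiber, using homogeneity of $\I$ and $\J$ respectively.

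Fix $B\notin\I\otimes\J$ on $X\times Y$. By definition of the product ideal, the set $E=\{x\in X:\ B_x\notin\J\}$ is $\I$-positive. Since $\I$ is homogeneous, pick a bijection $g\colon X\to E$ witnessing $\I|E\cong\I$. For each $x\in E$, since $\J$ is homogeneous and $B_x\in\J^+$, pick a bijection $h_x\colon Y\to B_x$ witnessing $\J|B_x\cong\J$. Then define
$$f\colon X\times Y\to X\times Y,\qquad f(x,y)=(g(x),h_{g(x)}(y)),$$
and set $A=f[X\times Y]=\bigcup_{x'\in E}\{x'\}\times B_{x'}\subseteq B$. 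Since $g$ is a bijection onto $E$ and each $h_{g(x)}$ is a bijection onto $B_{g(x)}$, the map $f$ is a bijection from $X\times Y$ onto $A$.

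It remains to verify that $f$ witnesses $(\I\otimes\J)|A\cong\I\otimes\J$; then $A\in H(\I\otimes\J)$ and we are done. Take any $C\subseteq A$. A direct computation of sections gives $(f^{-1}[C])_x=h_{g(x)}^{-1}[C_{g(x)}]$, and since $h_{g(x)}$ witnesses $\J|B_{g(x)}\cong\J$ (and $C_{g(x)}\subseteq B_{g(x)}$), we have $(f^{-1}[C])_x\notin\J$ iff $C_{g(x)}\notin\J$. Therefore
$$\{x\in X:\ (f^{-1}[C])_x\notin\J\}=g^{-1}\bigl[\{x'\in E:\ C_{x'}\notin\J\}\bigr].$$
Since $C\subseteq A$ forces $\{x':\ C_{x'}\notin\J\}\subseteq E$, and $g$ witnesses $\I|E\cong\I$, the left-hand set belongs to $\I$ iff $\{x':\ C_{x'}\notin\J\}\in\I$, i.e.\ iff $C\in\I\otimes\J$. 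Hence $f^{-1}[C]\in\I\otimes\J \Leftrightarrow C\in\I\otimes\J$, as required.

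I do not foresee any real obstacle; the construction is a natural ``$g$ on the base, $h_x$ on the fibers'' recipe, and the only care needed is in matching the sections correctly so that the computation of $\{x:(f^{-1}[C])_x\notin\J\}$ pulls back cleanly through $g$. The one minor subtlety is making sure that the homogeneity bijections $h_x$ exist uniformly for every $x\in E$, which is immediate from the fact that homogeneity of $\J$ gives such an isomorphism for \emph{every} $\J$-positive set.
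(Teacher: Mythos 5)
Your proof is correct and follows essentially the same route as the paper's: pass to the subset $\bigcup_{x\in E}\{x\}\times B_x$, combine a homogeneity bijection for $\I$ on the base with homogeneity bijections for $\J$ on each positive fiber, and invoke Corollary \ref{homogeneous}. The only difference is that you spell out the section computation verifying the isomorphism, which the paper leaves implicit.
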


\begin{proof}
Take any $A\notin\I\otimes\J$. Then the set $B=\{n\in\omega:\ A_n\notin\J\}$ does not belong to $\I$. Define
$$A'=\bigcup_{n\in B}\{n\}\times A_n\notin\I\otimes\J.$$
By Corollary \ref{homogeneous} it suffices to show that $A'\in H(\I\otimes\J)$. 

For each $n\in B$ there is a bijection $f_n\colon A_n\to\omega$ such that $X\in J\Leftrightarrow f_n^{-1}[X]\in \J|A_n$ for each $X\subseteq\omega$. There is also a bijection $g\colon B\to\omega$ such that $X\in I\Leftrightarrow g^{-1}[X]\in \I|B$ for each $X\subseteq\omega$. Define $\varphi\colon A'\to\omega\times\omega$ by $\varphi((i,j))=(g(i),f_i(j))$ for each $(i,j)\in A'$. Then $\varphi$ is a bijection witnessing that $\I\otimes\J\cong \I\otimes\J|A'$.
\end{proof}

\begin{remark}
Let $\fin^1=\fin$ and $\fin^{n+1}=\fin\otimes\fin^n$ for all $n>1$. By Example \ref{Fin} and Proposition \ref{otimes} all ideals $\fin^n$, for $n\geq 1$, are homogeneous. Note that this fact is also shown in \cite[Section 5]{Fremlin}.
\end{remark}

Now we proceed to some applications of our results to topology. 

Let $\I$ be an ideal on $\omega$. A sequence of reals $(x_n)_{n\in \omega}$ is \emph{$\I$-convergent} to $x\in \R$, if $\{n\in \omega:\ |x_n-x|\geq \varepsilon\}\in \I$ for any $\varepsilon>0$. We say that a pair $(X,\I)$, where $X$ is a topological space and $\I$ is an ideal on $\omega$, has:
\begin{itemize}
	\item \emph{BW property}, if every sequence $(x_n)_{n\in\omega}\subseteq X$ has an $\I$-convergent subsequence $(x_n)_{n\in A}$ with $A\notin\I$;
	\item \emph{FinBW property}, if every sequence $(x_n)_{n\in\omega}\subseteq X$ has a convergent subsequence $(x_n)_{n\in A}$ with $A\notin\I$;
	\item \emph{hBW property}, if every sequence $(x_n)_{n\in A}\subseteq X$ with $A\notin\I$ has an $\I$-convergent subsequence $(x_n)_{n\in B}$ with $B\notin\I$ and $B\subseteq A$;
	\item \emph{hFinBW property}, if every sequence $(x_n)_{n\in A}\subseteq X$ with $A\notin\I$ has a convergent subsequence $(x_n)_{n\in B}$ with $B\notin\I$ and $B\subseteq A$.
\end{itemize}

\begin{proposition}
\label{top}
Suppose that $X$ and $Y$ are topological spaces and $\I$ is a homogeneous ideal. 
\begin{itemize}
	\item[(a)] $(X,\I)$ has the BW property if and only if $(X,\I)$ has the hBW property;
	\item[(b)] $(X,\I)$ has the FinBW property if and only if $(X,\I)$ has the hFinBW property;
	\item[(c)] if both $(X,\I)$ and $(Z,\I)$ have the BW property (FinBw property), then so does $(X\times Y,\I)$.
\end{itemize}
\end{proposition}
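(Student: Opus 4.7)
The plan is to exploit homogeneity exactly as in the remark preceding the proposition. For any $A\notin\I$, membership $A\in H(\I)$ provides a bi-$\I$-invariant injection $f\colon\omega\to\omega$ with $f[\omega]=A$, i.e.\ $Y\in\I\Leftrightarrow f[Y]\in\I$ for every $Y\subseteq\omega$. This lets me reindex any sequence indexed by $A$ as one indexed by $\omega$ while preserving which subsets of indices are in $\I$.

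For parts (a) and (b), the implications hBW $\Rightarrow$ BW and hFinBW $\Rightarrow$ FinBW are immediate (take $A=\omega$). For the converses, I would start with $(x_n)_{n\in A}$ with $A\notin\I$, take $f$ as above, and apply BW (respectively FinBW) of $(X,\I)$ to the reindexed sequence $(x_{f(n)})_{n\in\omega}$. This produces some $C\notin\I$ on which this sequence is $\I$-convergent (respectively convergent) to some $x$. Setting $B=f[C]\subseteq A$, bi-$\I$-invariance of $f$ gives $B\notin\I$, and pushing forward through $f$ yields that $(x_n)_{n\in B}$ has the required convergence property: for $\I$-convergence one uses bi-invariance to transfer each $\{n\in C:\ |x_{f(n)}-x|\geq\varepsilon\}\in\I$ to its $f$-image $\{n\in B:\ |x_n-x|\geq\varepsilon\}\in\I$; for ordinary convergence one only needs $f$ to be an injection.

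For part (c), the cleanest route is to invoke the stronger hBW (or hFinBW) form, available by (a) (respectively (b)). Given $(x_n,y_n)_{n\in\omega}\subseteq X\times Y$, I first apply BW of $(X,\I)$ to $(x_n)$ to obtain $A\notin\I$ and $x\in X$ with $(x_n)_{n\in A}$ $\I$-convergent to $x$. Then I apply hBW of $(Y,\I)$ to $(y_n)_{n\in A}$ to extract $B\subseteq A$, $B\notin\I$, and $y\in Y$ with $(y_n)_{n\in B}$ $\I$-convergent to $y$. Since $B\subseteq A$, $(x_n)_{n\in B}$ remains $\I$-convergent to $x$; writing a basic neighborhood of $(x,y)$ as $U\times V$, the set of $n\in B$ with $(x_n,y_n)\notin U\times V$ is the union of two sets in $\I$, giving $\I$-convergence of the product sequence. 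The FinBW case is identical with ordinary convergence throughout.

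I do not foresee a genuine obstacle—the proposition is essentially bookkeeping. The one subtlety is that transporting $\I$-convergence through $f$ requires bi-$\I$-invariance (not merely $\I$-invariance), and ensuring that the bijection supplied by $A\in H(\I)$ is indeed bi-$\I$-invariant is precisely the content of the remark preceding the proposition.
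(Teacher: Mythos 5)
Your argument is correct and is exactly the ``straightforward'' proof the paper omits: homogeneity supplies a bi-$\I$-invariant bijection onto any $\I$-positive set, which transfers BW/FinBW from $\omega$ to that set (giving the h-versions), and (c) then follows by applying the h-version to the second coordinate on the index set produced for the first. You also correctly note the one genuine point of care, namely that transporting $\I$-convergence needs bi-invariance, not mere invariance.
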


\begin{proof}
Straightforward.
\end{proof}

\begin{remark}
Fact \ref{top} generalizes results of \cite{FT}, \cite{Hindman} and \cite{vdW}, where (a), (b) and (c) were proved for $\I=\mathcal{W}$ and $\I=\mathcal{H}$.
\end{remark}

\section{Maximal ideals}
\label{maximal}

In this section we answer \cite[Question 1]{Inv}.

It is easy to see that for any ideal $\I$ and injection $f\colon\omega\to\omega$, if $\fix(f)\in\I^\star$ then $f$ is bi -$\I$-invariant. We say that an ideal $\I$ on $\omega$ satisfies condition $(C1)$, if the above implication can be reversed, i.e., for any bi-$\I$-invariant injection $f\colon\omega\to\omega$ we have $\fix(f)\in\I^\star$. Firstly we answer the first part of \cite[Question 1]{Inv} about characterization of the class of ideals satisfying condition $(C1)$.

\begin{theorem}
\label{bi-inv}
The following are equivalent for any ideal $\I$ on $\omega$:
\begin{itemize}
	\item[(a)] $\I$ does not satisfy condition $(C1)$;
	\item[(b)] there are $A,B\subseteq\omega$ such that $A\triangle B\notin\I$ and $\I|A\cong \I|B$.
\end{itemize}
\end{theorem}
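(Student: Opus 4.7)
The plan is to prove the two directions separately. The direction (b) $\Rightarrow$ (a) is a direct construction, while (a) $\Rightarrow$ (b) extracts the desired pair of sets from a witness of the failure of $(C1)$.

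For (b) $\Rightarrow$ (a), I would first reduce to the case $A \cap B = \emptyset$. Since $A \triangle B \notin \I$, one of $A \setminus B$ and $B \setminus A$ is outside $\I$, and swapping the roles of $A, B$ (replacing $g$ by $g^{-1}$) if needed one may assume $A \setminus B \notin \I$; then replacing $(A, B)$ by $(A \setminus B,\, g[A \setminus B])$ preserves both hypotheses and makes the two sets disjoint. With disjoint $A, B$ and isomorphism $g\colon A \to B$, define the involution $f \colon \omega \to \omega$ by $f\upharpoonright A = g$, $f\upharpoonright B = g^{-1}$, and $f(x) = x$ otherwise. The remaining verifications are routine: $f$ is a bijection of $\omega$, bi-$\I$-invariance is checked piecewise on the decomposition $X = (X \cap A) \cup (X \cap B) \cup (X \setminus (A \cup B))$ using that $g$ and $g^{-1}$ are $\I$-isomorphisms, and $\fix(f) = \omega \setminus (A \cup B) = \omega \setminus (A \triangle B) \notin \I^\star$, so $f$ witnesses the failure of $(C1)$.

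For (a) $\Rightarrow$ (b), I would take a bi-$\I$-invariant injection $f$ with $M := \omega \setminus \fix(f) \notin \I$. A short argument shows $f[M] \subseteq M$: if $x \in M$ and $f(x) \in \fix(f)$, injectivity forces $f(x) = x$, contradicting $x \in M$. Then I would greedily build $A \subseteq M$ that is maximal with respect to the condition $A \cap f[A] = \emptyset$ (an independent set in the digraph $x \mapsto f(x)$, constructible by enumerating $M$). Maximality forces every $y \in M \setminus A$ to lie in $f[A] \cup f^{-1}[A]$, giving
\[
M \subseteq A \cup f[A] \cup f^{-1}[A].
\]
The point is then that bi-$\I$-invariance implies that if $A \in \I$ then all three pieces on the right are in $\I$: the first two by $\I$-invariance of $f$, and the third via the identity $f[f^{-1}[A]] = A \cap f[\omega] \subseteq A$ combined with bi-invariance applied to $X = f^{-1}[A]$. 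Hence $A \in \I$ would force $M \in \I$, a contradiction; so $A \notin \I$. Setting $B := f[A]$ then produces disjoint sets with $A \triangle B = A \cup B \notin \I$, and $f\upharpoonright A$ is the required isomorphism $\I|A \cong \I|B$ inherited from the bi-$\I$-invariance of $f$.

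The principal obstacle is the treatment of $f^{-1}[A]$ in (a) $\Rightarrow$ (b), since $f$ is only assumed injective and not necessarily surjective, whereas bi-$\I$-invariance is phrased in terms of images. The identity $f[f^{-1}[A]] = A \cap f[\omega]$ is precisely the bridge that lets image-based invariance control preimages, and without this observation the independent-set argument does not close up. All other steps — the reduction to disjoint $A, B$ in (b) $\Rightarrow$ (a), the piecewise injectivity and invariance checks, and the computation of $\fix(f)$ — are routine bookkeeping.
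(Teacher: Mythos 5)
Your proof is correct and rests on the same two mechanisms as the paper's: for (b) $\Rightarrow$ (a), the involution that swaps two disjoint $\I$-isomorphic pieces and fixes everything else; for (a) $\Rightarrow$ (b), covering $\omega\setminus\fix(f)$ by $A\cup f[A]\cup f^{-1}[A]$ for a suitable $A$ with $A\cap f[A]=\emptyset$, together with the key observation that $f[f^{-1}[A]]\subseteq A$ lets bi-invariance push $A\in\I$ to $f^{-1}[A]\in\I$ even though $f$ need not be surjective. The difference is organizational, and to your credit: the paper splits (a) $\Rightarrow$ (b) into the cases $f[\omega]\notin\I^\star$ (where it takes $A=\omega$, $B=f[\omega]$) and $f[\omega]\in\I^\star$ (where it runs a greedy minimum-picking recursion producing exactly your independent set), whereas your maximal-independent-set argument handles both uniformly; likewise, in (b) $\Rightarrow$ (a) the paper distinguishes whether $\{a\in A\setminus B:\ f(a)\in B\setminus A\}$ or $\{a\in A\setminus B:\ f(a)\in A\cap B\}$ is $\I$-positive, while your preliminary replacement of $(A,B)$ by $(A\setminus B,\ g[A\setminus B])$ makes the sets disjoint at the outset and renders that split unnecessary. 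All the reductions you label as routine (disjointness of $A\setminus B$ and $g[A\setminus B]$, maximality of the greedily built independent set, $f(y)\neq y$ for $y\in M$ so that singletons are addable) do check out.
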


\begin{proof}
\textbf{(a) $\Rightarrow$ (b): }By the assumption there is a bi-$\I$-invariant injection $f\colon\omega\to\omega$ such that $\fix(f)\notin\I^\star$. We can assume that $f[\omega]\notin\I$ (if $f[\omega]\in \I$, then $\omega=f^{-1}[f[\omega]]\in\I$, a contradiction). First we deal with the case $f[\omega]\notin\I^\star$. Define $A=\omega$ and $B=f[\omega]$. Then $A\triangle B\notin\I$ and $f$ witnesses that $\I|A=\I\cong \I|B$ since $f$ is bi-$\I$-invariant.

Assume now that $f[\omega]\in\I^\star$. We inductively pick points $a_n$ and $b_n$ for $n\in\omega$.  We start with $a_0=\min (\omega\setminus\fix(f))$ and $b_0=f(a_0)$. If all $a_k$ and $b_k$ for $k\leq n$ are defined, let 
$$a_{n+1}=\min \left(\omega\setminus (\fix(f)\cup \{a_k:\ k\leq n\}\cup \{b_k:\ k\leq n\}\cup f^{-1}[\{a_k:\ k\leq n\}])\right)$$
and $b_{n+1}=f(a_{n+1})$. Define $A=\{a_n:\ n\in\omega\}$ and $B=\{b_n:\ n\in\omega\}$. Then $A\cap B=\emptyset$. Moreover, $A\triangle B\notin\I$. Indeed, if $A\triangle B\in\I$ then also $f^{-1}[A]\in\I$, which contradicts $A\cup B\cup f^{-1}[A]=\omega\setminus\fix(f)\notin\I$. Finally, observe that $f\upharpoonright A$ is a bijection between $A$ and $B$ witnessing that $\I|A\cong \I|B$.

\textbf{(b) $\Rightarrow$ (a): }There is a bijection $f\colon A\to B$ witnessing $\I|A\cong \I|B$. Since $A\triangle B\notin\I$, either $A\setminus B\notin\I$ or $B\setminus A\notin\I$. Suppose that $A\setminus B\notin\I$ (the other case is similar). There are two possibilities.

If the set $A'=\{a\in A\setminus B:\ f(a)\in B\setminus A\}$ is not in $\I$, then define $g\colon\omega\to\omega$ by
$$g(x)=\left\{\begin{array}{ll}
f(x) & \textrm{if } x\in A'\\
f^{-1}(x) & \textrm{if } x\in f[A']\\
x & \textrm{if } x\notin A'\cup f[A']
\end{array}\right.$$
Then $g$ is a bi-$\I$-invariant injection and $\fix(g)^c\supseteq A'\notin\I$.

If the set $A''=\{a\in A\setminus B:\ f(a)\in A\cap B\}$ is not in $\I$, then define $g\colon\omega\to\omega$ by
$$g(x)=\left\{\begin{array}{ll}
f(x) & \textrm{if } x\in A''\\
f^{-1}(x) & \textrm{if } x\in f[A'']\\
x & \textrm{if } x\notin A''\cup f[A'']
\end{array}\right.$$
Then $g$ is a bi-$\I$-invariant injection and $\fix(g)^c\supseteq A''\notin\I$.
\end{proof}

If $\I$ and $\J$ are two ideals on $\omega$ then $\I\oplus\J$ is an ideal on $\{0,1\}\times\omega$ consisting of all $A\subseteq\{0,1\}\times\omega$ such that $\{n\in\omega:\ (0,n)\in A\}\in\I$ and $\{n\in\omega:\ (1,n)\in A\}\in\J$.

\begin{remark}
From the previous theorem it is easy to see that if some ideal satisfies condition $(C1)$, then it is anti-homogeneous. On the other hand, if $\I$ is any maximal ideal, then the ideal $\J=\I\oplus\I$ is anti-homogeneous, however, it does not satisfy condition $(C1)$. In Theorem \ref{ex} we construct an $\mathcal{F}_{\sigma\delta}$ anti-homogeneous ideal not satisfying condition $(C1)$.
\end{remark}

Now we proceed to examples of ideals satisfying condition $(C1)$ (which are also examples of anti-homogeneous ideals, by the above remark). The following two examples come from \cite{Inv}, however, we give a much simpler proofs.

\begin{example}[cf. {\cite[Corollary 9]{Inv}}]
\label{max}
Let $\I$ be any maximal ideal. Then $\I$ satisfies condition $(C1)$. 

Indeed, if $A\triangle B\notin\I$, then either $A\in\I$ and $B\in\I^\star$ or $B\in\I$ and $A\in\I^\star$. But then $\I|A$ and $\I|B$ cannot be isomorphic, since one of those ideals is isomorphic to $\I$, while the second one is isomorphic to $\mathcal{P}(\omega)$.
\end{example}

\begin{example}[cf. {\cite[Example 10]{Inv}}]
Let $\I$ and $\J$ be two non-isomorphic maximal ideals. Then the ideal $\I\oplus\J$ satisfies condition $(C1)$. 

Indeed, suppose that $A\triangle B\notin\I\oplus\J$ for some $A,B\subseteq\omega$. Without loss of generality we can assume that $A\cap (\{0\}\times\omega)\in\I$ but $A\cap (\{1\}\times\omega)\notin\J$ and $B\cap (\{0\}\times\omega)\notin\I$ but $B\cap (\{1\}\times\omega)\in\J$. Then we have $(\I\oplus\J)|A\cong\J$ and $(\I\oplus\J)|B\cong\I$. Hence, $\I|A\not\cong \I|B$. 
\end{example}

Next example is new and more complicated than the previous ones. Before presenting it, we need to introduce some notations.

If $(X_i)_{i\in I}$ is a family of sets, then $\sum_{i\in I}X_i$ denotes their disjoint sum, i.e., the set of all pairs $(i,x)$, where $i\in I$ and $x\in X_i$. For an ideal $\mathcal{J}$ on $\omega$ and a sequence $\left(\mathcal{J}_i\right)_{i\in \omega}$ of ideals on $\omega$ the family of all sets of the form $\sum_{i\in A}B_i\cup \sum_{i\in \omega\setminus A}\omega$, for $A\in\mathcal{J}^\star$ and $B_i\in\mathcal{J}_i$, constitutes a basis of an ideal on $\omega\times\omega$. We denote this ideal by $\mathcal{J}$-$\sum_{i\in \omega}\mathcal{J}_i$ and call \emph{$\mathcal{J}$-Fubini sum of the ideals $\left(\mathcal{J}_i\right)_{i\in I}$}.

\begin{example}
Let $(\J_i)_{i\in\omega}$ be a sequence of pairwise non-isomorphic maximal ideals on $\omega$. Then the ideal $\I=\fin$-$\sum_{i\in \omega}\mathcal{J}_i$ satisfies condition $(C1)$. 

Indeed, suppose that there are $A,B\subseteq\omega^2$ such that $A\triangle B\notin\I$ and $\I|A\cong \I|B$. Then $A,B\notin\I$ (if $A\in\I$, then $B\in\I$ by $\I|A\cong \I|B$, which contradicts $A\triangle B\notin\I$). Therefore, the set $R=\{n\in\omega:\ A_n\in\J^\star_n\}$ is infinite. Let $f\colon A\to B$ be the bijection witnessing that $\I|A\cong \I|B$. Define 
$$S=\{n\in R:\ \exists_{k(n)\in\omega} \left(f\left[\{n\}\times A_n\right]\right)_{k(n)}\in\J^\star_{k(n)}\}$$ 
and $T=R\setminus S$. Note that if $n\neq m$ and $n,m\in S$ then $k(n)\neq k(m)$. There are two possible cases.

{\bf Case 1.: }If $T\notin\fin$, then consider $f\left[\sum_{i\in T}\{i\}\times A_i\right]\notin\I|B$. Let $L$ consist of those $l\in\omega$ for which infinitely many $f\left[\{j\}\times A_j\right]$'s, for $j\in T$, intersect $\{l\}\times\omega$. Observe that $L\notin\fin$ and $L\times\omega\cap f\left[\sum_{i\in T}\{i\}\times A_i\right]\notin\I$. 

Firstly we will show that the set $T'$ consisting of those $j\in T$, for which there is some $l(j)\in L$ such that $(f^{-1}[\{l(j)\}\times\omega\cap B]\cap \{j\}\times A_j)_j\in J_j^\star$, is finite. Suppose otherwise and consider the case that there is some $l\in L$ such that $l=l(j)$ for infinitely many $j\in T'$. Then $X=\{l\}\times\omega\cap B\in\I|B$, but $f^{-1}[X]\notin\I|A$. A contradiction. On the other hand, if there is no such $l$, then the set 
$$X=\bigcup_{j\in T'}f^{-1}[\{l(j)\}\times\omega\cap B]\cap \{j\}\times A_j$$
is such that $X\notin\I|A$. However, $f[X]\in\I|B$ since 
$$(f[X])_{l(j)}=\bigcup_{ \{j'\in T':\ l(j')=l(j)  \} }(f[\{j'\}\times A_{j'}])_{l(j)}\in\J_{l(j)}$$ 
for all $j\in T'$ (by the fact that $j\in T$) and $(f[X])_i=\emptyset$ for $i\notin\{l(j):\ j\in T'\}$. Again we get a contradiction. Therefore, $T'$ is finite.

Let $T\setminus T'=\{t_0,t_1,\ldots\}$ and $L=\{l_0,l_1,\ldots\}$. Consider the set 
$$X=\bigcup_{i\in\omega}\left(f[\{t_i\}\times A_{t_i}]\setminus (\{0,1,\ldots,l_i\}\times\omega)\right).$$
Then $X\in\I|B$. To get a contradiction we need to show that $f^{-1}[X]\notin\I|A$. Indeed, it follows from the fact that for each $i\in\omega$ we have $$f^{-1}\left[X\right]\cap A_{t_i}=A_{t_i}\setminus\bigcup_{j<l_i}f^{-1}\left[\{l_j\}\times\omega\cap B\right]$$
and $f^{-1}\left[\{l_j\}\times\omega\cap B\right]\in\J_{l_i}$, since $t_i\notin T'$. 

{\bf Case 2.: }If $T\in\fin$, then $S\notin\fin$. Since $A\triangle B\notin\I$, we can pick $k(n)$'s in such a way that $S'=\{n\in S:\ n\neq k(n)\}\notin\fin$. Let $\{s_0,s_1,\ldots\}$ be an enumeration (without repetitions) of $S'$. Note that $\J_{s_i}|A_{s_i}\cong\J_{s_i}$ and $\J_{k(s_i)}|f[A_{s_i}]\cong \J_{k(s_i)}$ by maximality of all $\J_i$'s. Since the ideals $(\J_i)_{i\in\omega}$ are pairwise non-isomorphic, for each $i\in\omega$ one can find $X_{s_i}\subseteq A_{s_i}$ such that $X_{s_i}\in\J_{s_i}|A_{s_i}$ but $f[\{s_i\}\times X_{s_i}]\notin\J_{k(s_i)}|f[A_{s_i}]$ (if for some $i$ there is $Y_{s_i}\subseteq A_{s_i}$ such that $Y_{s_i}\notin\J_{s_i}|A_{s_i}$ but $f[\{s_i\}\times Y_{s_i}]\in\J_{k(s_i)}|f[A_{s_i}]$ then by maximality of $\J_i$'s it suffices to take $X_{s_i}=A_{s_i}\setminus Y_{s_i}$). Define $$X=\bigcup_{i\in\omega}\{s_i\}\times X_{s_i}.$$
Then obviously $X\in\I|A$. Moreover, $f[X]\notin\I|B$ (since $k(s_i)\neq k(s_j)$ for $i\neq j$). A contradiction. 
\end{example}

\begin{problem}
Is there a "nice" (for instance Borel or analytic) ideal satisfying condition $(C1)$?
\end{problem}

Now we proceed to answering the second part of \cite[Question 1]{Inv} about characterization of the class of ideals $\I$ such that for any $\I$-invariant injection $f\colon\omega\to\omega$ we have either $\fix(f)\in\I^\star$ or $f[\omega]\in\I$.

\begin{theorem}
The following are equivalent for any ideal $\I$ on $\omega$:
\begin{itemize}
	\item[(a)] there is an $\I$-invariant injection $f\colon\omega\to\omega$ with $\fix(f)\notin\I^\star$ and $f[\omega]\notin\I$;
	\item[(b)] there are $A,B\subseteq\omega$ such that $B\notin\I$, $A\triangle B\notin\I$ and $\I|A\sqsubseteq \I|B$;
	\item[(c)] $\I$ is not a maximal ideal. 
\end{itemize}
\end{theorem}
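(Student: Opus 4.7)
The plan is to prove the three equivalences by establishing $(a)\Leftrightarrow(c)$ and $(b)\Leftrightarrow(c)$, which avoids any direct $(b)\Rightarrow(a)$ argument.

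For $(a)\Rightarrow(c)$ I will argue contrapositively. Suppose $\I$ is maximal and $f$ is an $\I$-invariant injection with $f[\omega]\notin\I$; maximality then promotes $f[\omega]\notin\I$ to $f[\omega]\in\I^\star$. The key step is to lift one-way $\I$-invariance to bi-$\I$-invariance: given $X\notin\I$, maximality gives $\omega\setminus X\in\I$, hence $f[\omega\setminus X]\in\I$, and then by injectivity the complement $\omega\setminus f[X]=(\omega\setminus f[\omega])\cup f[\omega\setminus X]$ is a union of two $\I$-sets, so $f[X]\in\I^\star$. With $f$ bi-$\I$-invariant, Example~\ref{max} forces $\fix(f)\in\I^\star$, contradicting the hypothesis on $f$. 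For $(b)\Rightarrow(c)$, under the same maximality assumption, $B\notin\I$ gives $\omega\setminus B\in\I$, so $A\setminus B\subseteq\omega\setminus B$ lies in $\I$; the condition $A\triangle B\notin\I$ then forces $B\setminus A\in\I^\star$, whose complement $A\cup(\omega\setminus B)$ lies in $\I$, whence $A\in\I$. But then $\I|A=\cP(A)$, so the bijection $g\colon B\to A$ witnessing $\I|A\sqsubseteq\I|B$ sends $A\in\I|A$ to $g^{-1}[A]=B\in\I|B$, forcing $B\in\I$, a contradiction.

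For $(c)\Rightarrow(a)$ and $(c)\Rightarrow(b)$ I use a single construction, splitting on whether $\I=\fin$. If $\I=\fin$, the shift $f(n)=n+1$ witnesses $(a)$, and $A=\{2n:n\in\omega\}$, $B=\omega$, with $g(n)=2n$, witnesses $(b)$. Otherwise fix an infinite $Z\in\I$ and, by non-maximality, a partition $\omega=X\cup Y$ with $X,Y\in\I^+$; after possibly swapping $X$ and $Y$, $Y':=Z\cap X$ is an infinite $\I$-subset of $X$. Fix a bijection $\psi\colon Y'\to Y\cup Y'$ with no fixed points, and define $f\colon\omega\to\omega$ to be the identity on $X\setminus Y'$ and $\psi^{-1}$ on $Y\cup Y'$. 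One then checks $f[\omega]=X\notin\I$, $\omega\setminus\fix(f)=Y\cup Y'\supseteq Y\notin\I$, and $f[C]\subseteq(C\cap(X\setminus Y'))\cup Y'\in\I$ for every $C\in\I$, giving $(a)$. For $(b)$ the same partition serves: take $A=\omega$, $B=X$, and let $g\colon X\to\omega$ be the identity on $X\setminus Y'$ and $\psi$ on $Y'$; then $A\triangle B=Y\notin\I$ and an identical calculation shows $\I\sqsubseteq\I|X$.

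The main subtlety is recognising that $\I=\fin$ must be treated separately, since $\fin$ has no infinite member to play the role of $Y'$ and so the general construction breaks; the shift $n\mapsto n+1$ is an easy substitute. Outside that case, the argument is essentially bookkeeping on the partition $\omega=(X\setminus Y')\sqcup Y'\sqcup Y$: both $f$ and $g$ act as the identity outside $Y\cup Y'$, and their moving parts have image contained in the $\I$-small set $Y'$, which is what yields invariance in one line.
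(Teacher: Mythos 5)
Your proof is correct, but it is organised quite differently from the paper's. The paper proves the cycle (a)$\Rightarrow$(c)$\Rightarrow$(b)$\Rightarrow$(a); you instead prove (a)$\Leftrightarrow$(c) and (b)$\Leftrightarrow$(c) separately, which is logically equivalent but changes which implications carry the real work. Your (a)$\Rightarrow$(c) matches the paper's (and usefully spells out the promotion of invariance to bi-invariance, via $\omega\setminus f[X]=(\omega\setminus f[\omega])\cup f[\omega\setminus X]$, which the paper only sketches). The genuine divergences are elsewhere. First, you replace the paper's (b)$\Rightarrow$(a), a nontrivial construction gluing the $\sqsubseteq$-witness to a bijection onto an auxiliary infinite $\I$-set, with a two-line contrapositive (b)$\Rightarrow$(c): under maximality, $B\notin\I$ forces $A\in\I$, and then the witnessing bijection $g\colon B\to A$ pulls $A\in\I|A$ back to $B\in\I|B$, a contradiction. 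Second, for (c)$\Rightarrow$(b) the paper splits on whether $\I$ is dense and, in the dense case, invokes the cited characterisation of $\sqsubseteq$ for dense ideals via injections; your single construction from the partition $\omega=(X\setminus Y')\sqcup Y'\sqcup Y$ with $Y'\in\I$ infinite is self-contained and avoids both the case split and the external fact, and it simultaneously yields (c)$\Rightarrow$(a), which the paper never proves directly. The only price you pay is having to handle $\I=\fin$ separately (no infinite $Y'$ exists there), which you do correctly with the shift and the even numbers; the paper's non-dense case absorbs $\fin$ automatically. On balance your route is more elementary and arguably shorter, at the cost of not producing the explicit passage from a $\sqsubseteq$-witness to an invariant injection that the paper's (b)$\Rightarrow$(a) exhibits.
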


\begin{proof}
\textbf{(a) $\Rightarrow$ (c): }Let $\I$ be a maximal ideal. We will show that $\I$ does not satisfy condition (a). Suppose that there is an $\I$-invariant injection $f\colon\omega\to\omega$ with $\fix(f)\notin\I^\star$ and $f[\omega]\notin\I$. Then $C\in\I$ and $f^{-1}[C]\in\I^\star$ would contradict $f[\omega]\in\I^\star$, so actually $f$ is bi-$\I$-invariant. But then $\I$ cannot be maximal by Theorem \ref{bi-inv} and Example \ref{max}. A contradiction.

\textbf{(c) $\Rightarrow$ (b): }Take any not maximal ideal $\I$. Assume first that $\I$ is not dense. Take $C\subseteq\omega$ such that $\I|C\cong\fin$. Let $A$ and $B$ be two infinite and disjoint subsets of $C$. Then $A$ and $B$ witness that $\I$ satisfies condition (b).

Assume now that $\I$ is dense and take $A=\omega$ and any $B\notin\I\cup\I^\star$. Then $A\triangle B=B^c\notin\I$. Define $\phi\colon B\to\omega$ by $f(x)=x$ for all $x\in B$. Let us recall that if $\mathcal{J}_1$ is a dense ideal, then $\mathcal{J}_1\sqsubseteq\mathcal{J}_2$ if and only if there is a $1-1$ function $f\colon\bigcup\mathcal{J}_2\to\bigcup\mathcal{J}_1$ such that $f^{-1}[A]\in\mathcal{J}_2$ for all $A\in\mathcal{J}_1$ (cf. \cite{Katetov} or \cite{Farkas}). Therefore, by the above fact we get that $\I|A\sqsubseteq \I|B$. 

\textbf{(b) $\Rightarrow$ (a): }If $\I=\fin$, then obviously there is an $\I$-invariant injection $f\colon\omega\to\omega$ with $\fix(f)\notin\I^\star$ and $f[\omega]\notin\I$ (consider for instance the function given by $x\mapsto x+1$ for all $x\in\omega$). So we can suppose that $\I\neq\fin$. Take an infinite $C\in\I$. We can assume that $(A\cup B)\cap C=\emptyset$ (otherwise consider $A'=A\setminus C$ and $B'=B\setminus C$).

We are ready to define the $\I$-invariant injection $f$. Let $f\upharpoonright A$ be equal to the inverse of the function witnessing that $\I|A\sqsubseteq \I|B$ (so we already have $f[\omega]\supseteq f[A]=B\notin\I$), $f\upharpoonright (C\cup (B\setminus A))$ be any bijection between $C\cup (B\setminus A)$ and $C$, and $f\upharpoonright (\omega\setminus (A\cup B\cup C))$ be the identity function. Then $f$ is an $\I$-invariant injection. Moreover, $\fix(f)^c$ contains the set $A\triangle B\notin\I$, which does not belong to $\I$. Hence, $\fix(f)\notin\I^\star$. This finishes the entire proof.
\end{proof}

\section{Ideals induced by submeasures}
\label{submeasures}

In this section we answer \cite[Question 2]{Inv}. Firstly, we present its context.

A map $\phi:\mathcal{P}(\omega)\rightarrow[0,\infty]$ is a \emph{submeasure on $\omega$} if $\phi(\emptyset)=0$ and $\phi(A)\leq\phi(A\cup B)\leq\phi(A) + \phi(B)$, for all $A,B\subseteq \omega$. It is \emph{lower semicontinuous} if additionally $\phi(A) = \lim_{n\rightarrow\infty} \phi(A\cap \{0,\ldots,n\})$ for all $A\subseteq\omega$. For a lower semicontinuous submeasure $\phi$ on $\omega$ we define
$$\fin(\phi)=\{A\subseteq\omega: \phi(A)\textrm{ is finite}\},$$
$$\exh(\phi)=\left\{A\subseteq\omega:\ \lim_{n\rightarrow\infty}\phi(A\cap\{n,n+1,\ldots\})=0\right\}.$$

An ideal $\I$ is a \emph{P-ideal} if for every $(X_n)_{n\in \omega}\subseteq\mathcal{I}$ there is $X\in\mathcal{I}^\star$ with $X\cap X_n$ finite for all $n\in\omega$. 

For any lower semicontinuous submeasure $\phi$ on $\omega$ the ideal $\exh(\phi)$ is an $\mathcal{F}_{\sigma\delta}$ P-ideal and $\fin(\phi)$ is an $\mathcal{F}_{\sigma}$ ideal containing $\exh(\phi)$ \cite[Lemma 1.2.2]{Farah}.

\begin{proposition}[{\cite[Proposition 11]{Inv}}]
\label{FinExh}
Let $\varphi$ be a lower semi-continuous submeasure and $f\colon\omega\to\omega$ be an increasing injection such that there is $c_f>0$ with $\varphi(A)\geq c_f\varphi(f[A])$ for all $A\subseteq\omega$. Then $f$ is both $\fin(\varphi)$-invariant and $\exh(\varphi)$-invariant. Moreover, if there is $c'_f>0$ with $\varphi(A)\geq c'_f\varphi(f^{-1}[A])$ for all $A\subseteq\omega$, then $f$ is both bi-$\fin(\varphi)$-invariant and bi-$\exh(\varphi)$-invariant.
\end{proposition}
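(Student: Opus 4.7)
The plan is to unpack the definitions of $\fin(\varphi)$ and $\exh(\varphi)$ directly, using the submeasure comparisons together with one elementary fact about increasing injections of $\omega$. Namely, by a short induction $f(k)\geq k$ for every $k$, and so if I set $m_n=\min\{k:f(k)\geq n\}$ then $f^{-1}[\{n,n+1,\ldots\}]=\{m_n,m_n+1,\ldots\}$ with $m_n\leq n$ and $m_n\to\infty$ as $n\to\infty$ (since each $f(k)$ is finite). This lets me pass freely between tails of $A$ and tails of $f[A]$, which is what both the $\exh$ direction and the bi-invariance direction require.

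With that in hand, $\fin(\varphi)$-invariance is immediate: if $A\in\fin(\varphi)$ then the hypothesis gives $\varphi(f[A])\leq\varphi(A)/c_f<\infty$. For $\exh(\varphi)$-invariance, I take $A\in\exh(\varphi)$ and compute
$$\varphi(f[A]\cap\{n,n+1,\ldots\})=\varphi(f[A\cap\{m_n,m_n+1,\ldots\}])\leq\varphi(A\cap\{m_n,m_n+1,\ldots\})/c_f,$$
and the right-hand side tends to $0$ because $m_n\to\infty$ and $A\in\exh(\varphi)$.

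For the ``moreover'' part I exploit that $f^{-1}[f[X]]=X$ for every $X\subseteq\omega$ since $f$ is an injection. Substituting $A$ into the second comparison yields $\varphi(A)\leq\varphi(f[A])/c'_f$, which handles $\fin(\varphi)$. Substituting $f[A]\cap\{n,n+1,\ldots\}$ instead yields
$$\varphi(A\cap\{m_n,m_n+1,\ldots\})\leq\varphi(f[A]\cap\{n,n+1,\ldots\})/c'_f,$$
whose right-hand side tends to $0$ whenever $f[A]\in\exh(\varphi)$; and because $m_n\leq n$, monotonicity of $\varphi$ gives $\varphi(A\cap\{n,n+1,\ldots\})\leq\varphi(A\cap\{m_n,m_n+1,\ldots\})\to 0$, so $A\in\exh(\varphi)$.

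There is essentially no obstacle here; the only point that requires any care is verifying $m_n\to\infty$ and $m_n\leq n$, which are exactly the ingredients that make the tail comparison go through in both directions.
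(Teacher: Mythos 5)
Your argument is correct: the identity $f^{-1}[\{n,n+1,\ldots\}]=\{m_n,m_n+1,\ldots\}$ with $m_n\leq n$ and $m_n\to\infty$ is exactly what is needed to translate tails back and forth, and each of the four implications then follows from the appropriate submeasure comparison (note only that in the ``moreover'' part you substitute $f[A]$, not $A$, into the second comparison, as your use of $f^{-1}[f[X]]=X$ makes clear). The paper quotes this proposition from \cite{Inv} without reproducing a proof, so there is nothing to compare against; your proof is the natural one and is complete.
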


Set a function $f\colon\omega\to [0,\infty)$ such that $\sum_{n\in\omega}f(n)=+\infty$. For $A\subseteq\omega$ and an interval $I\subseteq\omega$ we denote $A_f(I)=\sum_{n\in I\cap A}f(n)$. Define the \emph{summable ideal associated with $f$} by the formula
$$\I_{(f(n))}=\left\{A\subseteq \omega:\ \sum_{n\in A}f(n)\textrm{ is finite}\right\}$$
and the \emph{Erd{\"o}s-Ulam ideal associated with $f$} by the formula
$$\eu_f=\left\{A\subseteq\omega:\ \limsup_{n\to\infty}\frac{A_f[1,n]}{\omega_f[1,n]}=0\right\}.$$
Then $\I_{(f(n))}=\exh(\phi)=\fin(\phi)$ for a lower semi-continuous submeasure $\phi:\mathcal{P}(\omega)\rightarrow[0,\infty]$ given by $\phi(A)=\sum_{n\in A}f(n)$ for all $A\subseteq\omega$. Hence, $\I_{(f(n))}$ is an $\mathcal{F}_{\sigma}$ P-ideal. What is more, $\eu_f=\exh(\varphi)$ for a lower semi-continuous submeasure $\varphi:\mathcal{P}(\omega)\rightarrow[0,\infty]$ given by 
$$\varphi(A)=\sup_{n\in\omega}\frac{A_f[1,n]}{\omega_f[1,n]}$$
for all $A\subseteq\omega$. Hence, $\eu_f$ is an $\mathcal{F}_{\sigma\delta}$ P-ideal.

\begin{proposition}[{\cite[Section 4]{Inv}}]
Let $g\colon\omega\to [0,+\infty)$ be such that $\sum_{n\in \omega}g(n)=+\infty$. Assume additionally that $g$ is nonincreasing. Then every increasing injection $f\colon\omega\to\omega$ is both $\eu_g$-invariant and $\I_{(g(n))}$-invariant. 
\end{proposition}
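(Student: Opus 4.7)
The plan is to reduce the statement to Proposition \ref{FinExh} applied to the natural submeasures defining the two ideals. Recall that $\I_{(g(n))}=\fin(\phi)$ for $\phi(A)=\sum_{n\in A}g(n)$ and $\eu_g=\exh(\varphi)$ for $\varphi(A)=\sup_{n\in\omega}\frac{A_g[1,n]}{\omega_g[1,n]}$. Thus it suffices to exhibit, for an arbitrary increasing injection $f\colon\omega\to\omega$, constants $c_f,c'_f>0$ (in fact $c_f=c'_f=1$ will do) witnessing that $\phi(A)\geq c_f\phi(f[A])$ and $\varphi(A)\geq c'_f\varphi(f[A])$ for every $A\subseteq\omega$.

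The first step is the elementary remark that an increasing injection $f\colon\omega\to\omega$ satisfies $f(k)\geq k$ for all $k\in\omega$ (by induction). Combined with the hypothesis that $g$ is nonincreasing, this yields the pointwise inequality $g(f(k))\leq g(k)$.

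The second step handles the summable ideal. Using $g(f(k))\leq g(k)$, we get
$$\phi(f[A])=\sum_{k\in A}g(f(k))\leq\sum_{k\in A}g(k)=\phi(A),$$
so Proposition \ref{FinExh} (with $c_f=1$) gives $\fin(\phi)$-invariance of $f$, i.e.\ $\I_{(g(n))}$-invariance.

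The third step handles the Erd\"os--Ulam ideal. For each $n\in\omega$, since $f(k)\leq n$ forces $k\leq f(k)\leq n$, the substitution $m=f(k)$ gives
$$(f[A])_g[1,n]=\sum_{k\in A,\ f(k)\leq n}g(f(k))\leq\sum_{k\in A,\ k\leq n}g(k)=A_g[1,n],$$
where the inequality uses $g(f(k))\leq g(k)$ and the inclusion $\{k\in A:f(k)\leq n\}\subseteq\{k\in A:k\leq n\}$ (terms are nonnegative). Dividing by $\omega_g[1,n]$ and taking suprema yields $\varphi(f[A])\leq\varphi(A)$, and a second appeal to Proposition \ref{FinExh} (with $c'_f=1$) gives $\exh(\varphi)$-invariance of $f$, i.e.\ $\eu_g$-invariance.

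There is essentially no obstacle: the only place where care is needed is checking that $\{k\in A:f(k)\leq n\}\subseteq\{k\in A:k\leq n\}$, which relies precisely on the two standing hypotheses (monotonicity of $g$ and $f$ being an increasing injection). These hypotheses are both necessary: if $g$ were not monotone, or if $f$ were allowed to be only an injection without being increasing, the inequality $g(f(k))\leq g(k)$ could fail, and neither conclusion would hold in general.
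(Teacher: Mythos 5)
The paper cites this proposition from \cite{Inv} without reproducing a proof, so there is no in-paper argument to compare against; your proof is correct and is the natural one, reducing everything to Proposition \ref{FinExh} with $c_f=c'_f=1$ via the two observations $f(k)\geq k$ and $g$ nonincreasing. The only (harmless) imprecision is at the endpoint $k=0$ versus the interval $[1,n]$ used in the definition of $A_g[1,n]$, which perturbs the sums by at most the single bounded term $g(f(0))$ and affects neither the supremum bound needed for $\exh(\varphi)$ nor the summability argument.
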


The next proposition shows that monotonicity condition imposed on $g$ is crucial.

\begin{proposition}[{\cite[Proposition 15]{Inv}}]
There are an Erd{\"o}s-Ulam ideal $\I$ and an increasing injection $f\colon\omega\to\omega$ such that neither $f$ nor $f^{-1}$ is $\I$-invariant.
\end{proposition}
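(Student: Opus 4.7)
The plan is to give an explicit example. Define $g\colon\omega\to[0,+\infty)$ by $g(n)=n^{2}$ for even $n$ and $g(n)=1$ for odd $n$; since $\sum_{n\in\omega}g(n)=+\infty$, $\I=\eu_g$ is a bona fide Erd{\"o}s-Ulam ideal. For the injection I would take the shift $f\colon\omega\to\omega$, $f(n)=n+1$, which is obviously strictly increasing. I claim that the single set $B=\{1,3,5,\ldots\}$ of all positive odd integers simultaneously witnesses the failure of $\I$-invariance for both $f$ and $f^{-1}$.

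First I would verify $B\in\I$. Since $\sum_{k\le n,\ k\text{ even}}k^{2}\sim n^{3}/6$ dominates the $O(n)$ contribution coming from odd $k$, we have $\omega_{g}[1,n]\sim n^{3}/6$, while $B_{g}[1,n]$ merely counts the odd integers in $[1,n]$, so $B_{g}[1,n]\sim n/2$. Therefore $B_{g}[1,n]/\omega_{g}[1,n]\to 0$, i.e.\ $B\in\eu_g$.

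Next I would compute the two images. Directly, $f[B]=\{2,4,6,\ldots\}$ and $f^{-1}[B]=\{n\in\omega:n+1\in B\}=\{0,2,4,\ldots\}$, which coincide with the set of positive even integers up to a single point. In both cases the $g$-partial sum up to $n$ equals $\sum_{k\le n,\ k\text{ even}}k^{2}\sim n^{3}/6$, so both ratios $f[B]_{g}[1,n]/\omega_{g}[1,n]$ and $(f^{-1}[B])_{g}[1,n]/\omega_{g}[1,n]$ tend to $1$. Hence neither $f[B]$ nor $f^{-1}[B]$ lies in $\eu_g$, and $B$ certifies both non-invariances at once.

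There is no real obstacle here; the whole argument reduces to the elementary asymptotic $\sum_{k\le n,\ k\text{ even}}k^{2}\sim n^{3}/6$. The only genuine design decision is to concentrate the weight of $g$ on the even integers, so that the trivial shift $n\mapsto n+1$ already carries a set of vanishing $g$-density (the odd integers) onto a set of full $g$-density (the even integers)---this is exactly the failure of monotonicity that was ruled out in the previous proposition.
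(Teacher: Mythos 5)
Your construction is correct: with $g(n)=n^2$ for even $n$ and $g(n)=1$ for odd $n$, the odd integers $B$ satisfy $B_g[1,n]=O(n)$ against $\omega_g[1,n]\sim n^3/6$, so $B\in\eu_g$, while $f[B]$ and $f^{-1}[B]$ are (up to one point) the even integers, whose $g$-weight is asymptotically all of $\omega_g[1,n]$, so neither image is in $\eu_g$. The paper itself gives no proof of this proposition --- it only cites \cite[Proposition 15]{Inv} --- so there is nothing to compare against; your explicit example is a complete and valid argument, and it correctly isolates the intended phenomenon (concentrating the weight off the shift of a null set) that the monotonicity hypothesis of the preceding proposition excludes.
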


We are ready to formulate our problem. Let $g\colon\omega \to [0,\infty)$ be nondecreasing. Is it true that the class of all increasing injections $f\colon\omega \to \omega$ which are bi-$\eu_g$-invariant equals the class of all increasing injections $f\colon\omega \to \omega$ which are bi-$\I_{(1/g(n))}$-invariant?

Note that originally in \cite[Question 2]{Inv} the function $g$ is increasing. However, in the following considerations it would be clearer to use nondecreasing function instead of the increasing one. We can always modify nondecreasing $g$ to increasing $g'\geq g$ without changing the original ideal by making sure that $\sum_{n\in\omega} g'(n)-g(n)$ is convergent.

The next theorem shows that the answer is positive in the case of $g(n)=n$.

\begin{theorem}[{\cite[Corollary 20]{Inv}}]
\label{IdI1n}
Let $f\colon\omega\to\omega$ be an increasing injection. The following are equivalent:
\begin{itemize}
	\item $f$ is bi-$\I_d$-invariant;
	\item $\underline{d}(f[\omega])>0$;
	\item there is $c\in\omega$ such that $f(n)\leq Cn$ for all $n\geq 1$;
	\item $f$ is bi-$\I_{(1/n)}$-invariant.
\end{itemize}
\end{theorem}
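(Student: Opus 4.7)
The plan is to make condition (3) the hub and show each of the other three equivalent to it. The equivalence (2) $\Leftrightarrow$ (3) is a direct reformulation: since $f$ is strictly increasing, for any $m$ with $f(n)<m\leq f(n+1)$ one has $|f[\omega]\cap m|=n+1$, and therefore $\underline{d}(f[\omega])=\liminf_n n/f(n)$, which is positive exactly when $f(n)/n$ is bounded.

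For (3) $\Rightarrow$ (1) and (3) $\Rightarrow$ (4), I would first observe that $n\leq f(n)$ always (since $f$ is an increasing injection on $\omega$), which gives the uniform inequalities $|f[A]\cap n|\leq |A\cap n|$ and $\sum_{a\in A}1/f(a)\leq\sum_{a\in A}1/a$, and hence already provides $\I_d$- and $\I_{(1/n)}$-invariance of $f$. The hypothesis $f(n)\leq Cn$ then supplies the converses: $|f[A]\cap f(n)|/f(n)=|A\cap n|/f(n)\geq |A\cap n|/(Cn)$ transfers positive upper density from $A$ to $f[A]$, and analogously $1/f(a)\geq 1/(Ca)$ transfers divergence of the harmonic sum.

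The main content is the contrapositive of (1) $\Rightarrow$ (3) and of (4) $\Rightarrow$ (3), both of which I would handle with a single block construction. Assuming $f(n)/n$ is unbounded, pick inductively $n_1<n_2<\cdots$ with $n_{k+1}>2n_k$ and $f(n_k)\geq 2^k n_k$, and set $A=\bigcup_k[n_k,2n_k]$. Then $|A\cap(2n_k+1)|\geq n_k+1$ gives $\overline{d}(A)\geq 1/2$, so $A\notin\I_d$; and each block contributes $\log 2+o(1)$ to $\sum_{a\in A}1/a$, so $A\notin\I_{(1/n)}$. Conversely, for any $m$, the largest $a^*\in A$ with $f(a^*)<m$ lies in some block $[n_k,2n_k]$, so $|f[A]\cap m|\leq|A\cap(2n_k+1)|\leq 3n_k$ (using $n_{j+1}>2n_j$ to sum the earlier blocks) while $m>f(n_k)\geq 2^k n_k$, yielding $|f[A]\cap m|/m\to 0$ and hence $f[A]\in\I_d$. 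The same block decomposition bounds $\sum_{a\in A}1/f(a)\leq\sum_k(n_k+1)/(2^k n_k)<\infty$, so $f[A]\in\I_{(1/n)}$.

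The main obstacle will be coordinating the dual requirements $A\notin\I_d\cap\I_{(1/n)}$ and $f[A]\in\I_d\cap\I_{(1/n)}$ using only the freedom that $f(n_k)/n_k$ can be made arbitrarily large. The key idea is to take intervals $[n_k,2n_k]$ rather than singletons $\{n_k\}$: each block contributes a constant to the harmonic sum and a positive proportion to the counting density, independently of how fast $n_k$ itself must grow in order to realize $f(n_k)\geq 2^k n_k$, while on the image side the rapid growth of $f(n_k)$ swamps the block length.
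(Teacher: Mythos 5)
The paper does not actually prove Theorem \ref{IdI1n}; it is imported verbatim as \cite[Corollary 20]{Inv}, so there is no in-paper argument to compare yours against. Judged on its own, your proof is correct and complete. The hub structure around condition (3) is sound: the identity $|f[\omega]\cap m|=n+1$ for $f(n)<m\leq f(n+1)$ does give $\underline{d}(f[\omega])=\liminf_n n/f(n)$, the inequality $n\leq f(n)$ gives the ``easy'' halves of bi-invariance, and $f(n)\leq Cn$ correctly transfers positive upper density and divergence of the harmonic-type sum back to the image. The block construction for the contrapositive is the right device, and it is exactly the reason a single witness set $A$ kills both (1) and (4) simultaneously: the blocks $[n_k,2n_k]$ contribute a fixed positive amount to both upper density and the $\sum 1/a$ sum regardless of how large $n_k$ must be taken, while $f(n_k)\geq 2^kn_k$ forces $f[A]$ into both ideals. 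Two points deserve one explicit line each rather than being left implicit. First, the inductive choice of $n_k$ requires that for every $M$ there are \emph{arbitrarily large} $n$ with $f(n)\geq Mn$, not merely one such $n$; this follows because if $\{n: f(n)/n\geq M\}$ were finite then $\sup_n f(n)/n$ would be finite, contradicting unboundedness. Second, in the estimate $|A\cap(2n_k+1)|\leq 3n_k$ you should record that $\sum_{j\leq k}(n_j+1)\leq 2n_k+k$ and $k\leq n_k$ (from $n_k\geq 2^{k-1}$), and that $k\to\infty$ as $m\to\infty$ because $A$ is infinite. With those sentences added the argument is airtight.
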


We answer our problem in negative.

\begin{proposition}
There is a nondecreasing $g\colon\omega \to [0,\infty)$ such that the classes of all increasing injections $f\colon\omega \to \omega$ which are bi-$\eu_g$-invariant and of all increasing injections $f\colon\omega \to \omega$ which are bi-$\I_{(1/g(n))}$-invariant are not equal.
\end{proposition}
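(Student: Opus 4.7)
The plan is to exhibit an explicit counterexample: one nondecreasing $g$ and one increasing injection $f\colon\omega\to\omega$ that lies in one class but not the other. I would take $g(n)=\log(n+2)$; this is nondecreasing and everywhere positive, and both $\sum g(n)$ and $\sum 1/g(n)=\sum 1/\log(n+2)$ diverge, so $\eu_g$ and $\I_{(1/g(n))}$ are genuine ideals. The candidate injection is $f(n)=(n+2)^2-2$, chosen precisely so that
\[
g(f(n))=\log((n+2)^2)=2\log(n+2)=2g(n)\quad\text{for every }n\in\omega.
\]

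The identity $g\circ f=2g$ gives bi-$\I_{(1/g(n))}$-invariance immediately: for every $A\subseteq\omega$,
\[
\sum_{m\in f[A]}\frac{1}{g(m)}=\sum_{n\in A}\frac{1}{g(f(n))}=\tfrac12\sum_{n\in A}\frac{1}{g(n)},
\]
so the two series converge together, i.e.\ $f[A]\in\I_{(1/g(n))}$ iff $A\in\I_{(1/g(n))}$.

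It remains to rule out bi-$\eu_g$-invariance by showing $f[\omega]\in\eu_g$ (while $\omega\notin\eu_g$ trivially). By Stirling, $\omega_g[1,N]=\sum_{n=1}^N\log(n+2)\sim N\log N$. On the other hand, writing $K=\lfloor\sqrt{N+2}\rfloor-2\sim\sqrt N$,
\[
(f[\omega])_g[1,N]=\sum_{k=0}^{K}2\log(k+2)=2\log((K+2)!)\sim 2(K+2)\log(K+2)\sim\sqrt N\log N,
\]
because the sum has $\sim\sqrt N$ terms whose average is of order $\log N$. The ratio therefore behaves like $1/\sqrt N\to 0$, so $f[\omega]\in\eu_g$, and $f$ cannot be bi-$\eu_g$-invariant.

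The only point requiring care is the asymptotic for $(f[\omega])_g[1,N]$: one must notice that although $g\circ f=2g$ keeps individual weights comparable, the image $f[\omega]$ is so thin that it contains only $\sim\sqrt N$ of the first $N$ integers, producing the extra factor $1/\sqrt N$ against the denominator. Beyond this short Stirling-type estimate, the entire proof reduces to the algebraic identity $g\circ f=2g$ and should present no obstacle.
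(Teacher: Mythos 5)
Your proof is correct, but it takes a genuinely different route from the paper's. The paper's argument is a one-liner exploiting a degenerate choice: it takes $g$ nondecreasing and \emph{bounded}, so that $1/g(n)$ is bounded away from $0$ and hence $\I_{(1/g(n))}=\fin$ (every increasing injection is bi-$\fin$-invariant), while $\eu_g=\I_d$; it then invokes the already-established fact (Theorem \ref{IdI1n}) that some increasing injections, e.g.\ $n\mapsto n^2$, are not bi-$\I_d$-invariant. You instead build a non-degenerate, unbounded example: $g(n)=\log(n+2)$ with $f(n)=(n+2)^2-2$, engineered so that $g\circ f=2g$. The identity $\sum_{m\in f[A]}1/g(m)=\tfrac12\sum_{n\in A}1/g(n)$ does give bi-$\I_{(1/g(n))}$-invariance for every $A$, and your Stirling-type estimates $\omega_g[1,N]\sim N\log N$ versus $(f[\omega])_g[1,N]\sim\sqrt{N}\log N$ correctly show $f[\omega]\in\eu_g$ while $\omega\notin\eu_g$, so $f$ fails bi-$\eu_g$-invariance at $A=\omega$. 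What each approach buys: the paper's is shorter and leans on quoted results, but one could object that it only witnesses the failure through the trivial ideal $\fin$; yours is self-contained and shows the two invariance classes already diverge for an unbounded $g$ whose summable ideal $\I_{(1/\log(n+2))}$ is a genuinely nontrivial dense ideal, which is closer in spirit to the refined condition $(C2)$ the paper studies immediately afterwards.
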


\begin{proof}
If $g : \omega \to [0,\infty)$ is a nondecreasing bounded function, then $\eu_g=\I_d$, while $\I_{(1/g(n))}=\fin$. Since there are increasing functions that are not bi-$\I_d$-invariant, the classes of their increasing bi-$\I$-invariant functions are different.
\end{proof}

We can weaken the above property in the following way. We say that an Erd{\"o}s-Ulam ideal $\I$ satisfies condition $(C2)$, if there is such nondecreasing function $g : \omega \to [0,\infty)$ that $\I=\eu_g $ and the class of all increasing functions $f : \omega \to \omega$ which are bi-$\eu_g$-invariant equals the class of all increasing injections $f : \omega \to \omega$ which are bi-$\I_{(1/g(n))}$-invariant. Is it true that all Erd{\"o}s-Ulam ideals $\I$ satisfy condition $(C2)$?

We have two counterexamples. The first one, however, is not dense.

\begin{proposition}
There is an Erd{\"o}s-Ulam ideal which does not satisfy condition $(C2)$.
\end{proposition}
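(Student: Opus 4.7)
The plan is to construct an explicit non-dense Erd{\H o}s-Ulam ideal $\I$ and argue that its non-density is rigid across all representations, which blocks any matching of the bi-invariance classes.

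First I would define $g\colon\omega\to[0,\infty)$ inductively by blocks: put $n_0=0$, $M_0=1$, and given $n_k,M_k$, set $g\equiv M_k$ on $[n_k,n_{k+1})$ with $n_{k+1}=n_k+M_k$ and $M_{k+1}\geq 2\omega_g[1,n_{k+1}-1]$ (the spike condition). Then $g$ is nondecreasing, $\sum g=\infty$, and the block-width choice gives $\sum 1/g=\sum_k(n_{k+1}-n_k)/M_k=\sum_k 1=\infty$. Setting $B=\{n_k:k\in\omega\}$, the spike condition yields $g(n_k)/\omega_g[1,n_k]\geq 1/2$, so for every infinite $C\subseteq B$ and every $n_k\in C$ one has $C_g[1,n_k]\geq g(n_k)\geq\omega_g[1,n_k]/2$, giving $\limsup_n C_g[1,n]/\omega_g[1,n]\geq 1/2$ and $C\notin\eu_g$. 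Hence $\I:=\eu_g$ satisfies $\I|B=\fin$, so $\I$ is not dense.

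To show $\I$ fails $(C2)$, take any nondecreasing $h$ with $\sum 1/h=\infty$ and $\eu_h=\I$. Since non-density is intrinsic to $\I$, $\eu_h|B=\fin$; in particular $B\notin\eu_h$, so $\limsup_n B_h[1,n]/\omega_h[1,n]\geq\epsilon>0$. An elementary iteration---the inequality $\omega_h[1,n_k]\leq\epsilon^{-1}B_h[1,n_k]$ forces $\omega_h$ to grow geometrically along $B$, which in turn forces $h(n_k)$ to grow geometrically---yields $\sum_{n\in B}1/h(n)<\infty$, i.e., $B\in\I_{(1/h(n))}$. This produces the sharp asymmetry $\I|B=\fin$ versus $\I_{(1/h(n))}|B=\cP(B)$.

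Exploiting this, for each admissible $h$ I would construct an increasing injection $f_h$ in exactly one of the two bi-invariance classes: a ``plateau shift'' tailored to the spike structure of $h$ on $B$, that sends each maximal $h$-constant interval into the beginning of the next one. The cumulative $\sum 1/h$-mass on $f_h[\omega]$ telescopes into a convergent geometric series (by the rigidity step), so $f_h[\omega]\in\I_{(1/h(n))}$ while $\omega\notin\I_{(1/h(n))}$, witnessing that $f_h$ is not bi-$\I_{(1/h(n))}$-invariant. On the other hand, a direct density computation---which I expect to be the hardest step---shows $f_h$ preserves $\eu_h$-density in both directions, hence is bi-$\I$-invariant. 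Since such $f_h$ exists for every admissible $h$, the two classes differ for every representation, so $(C2)$ fails.
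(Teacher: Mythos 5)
Your block construction of $\I=\eu_g$ and the observation that $\I|B=\fin$ is a property of the ideal alone (hence independent of the representation) are fine, but the proof collapses exactly where the work lies. The decisive step --- producing, for \emph{every} nondecreasing $h$ with $\eu_h=\I$, an increasing injection $f_h$ lying in exactly one of the two classes --- is only asserted. The ``plateau shift'' is not even well defined for an arbitrary representation: $\eu_h=\eu_g$ does not force $h$ to have a plateau/spike structure (one can perturb $g$ to a strictly increasing $h$ with $h/g\to 1$ without changing the ideal), and for strictly increasing $h$ your map degenerates to $n\mapsto n+1$, whose image is $\omega\setminus\{0\}\notin\I_{(1/h(n))}$, contradicting the claimed $f_h[\omega]\in\I_{(1/h(n))}$. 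The two substantive assertions --- $f_h[\omega]\in\I_{(1/h(n))}$ and bi-$\eu_h$-invariance of $f_h$ --- are precisely the content of the proposition, and neither is proved; you yourself flag the second as ``the hardest step''. The rigidity step is also incomplete as sketched: $B\notin\eu_h$ only gives $B_h[1,n]\geq\epsilon\,\omega_h[1,n]$ along \emph{some} infinite set of indices, which forces geometric growth of the partial sums of $h$ on $B$ only along a subsequence with uncontrolled gaps; that alone does not yield $\sum_{n\in B}1/h(n)<\infty$.

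The paper sidesteps the quantification over representations entirely with one observation: the successor map $f(n)=n+1$ is bi-$\I_{(1/h(n))}$-invariant for \emph{every} nondecreasing $h$ (if $A=\{a_0<a_1<\ldots\}$ and $w=1/h$ is nonincreasing, then $w(a_i+1)\leq w(a_i)$ and $w(a_i+1)\geq w(a_{i+1})$, so the two sums differ by at most $w(a_0)$), while bi-$\eu_h$-invariance depends only on the ideal $\eu_h$, not on $h$. So a single Erd\H{o}s--Ulam ideal for which the shift moves some $\eu$-null set to an $\eu$-positive one settles the matter; the paper uses blocks of length $(2^n)!$ and the set of right endpoints. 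Your ideal would in fact work with the same test function: $\{n_k-1:k\in\omega\}\in\eu_g$ (its $g$-mass up to $n_k-1$ is about $2M_{k-1}$ against $\omega_g[1,n_k-1]\geq M_{k-1}^2$), yet its shift is $B\notin\eu_g$. I would recommend abandoning the per-representation construction and arguing along these lines.
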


\begin{proof}
Let $(I_n)_{n\in\omega}$ be a sequence of consecutive intervals such that $|I_n|=(2^n)!$ for each $n$. Denote $\I=\eu_h$, where $h(k)=(2^n)!$ for all $k\in I_n$. 

Since all summable ideals defined by monotonic functions are transitive, the increasing injection $f\colon\omega\to\omega$ defined by $f(n)=n+1$, for all $n$, is bi-$\I_{(1/g(n))}$-invariant for every nondecreasing function $g$. However, we will show that this function is not bi-$\I$-invariant.

Let $A=\{\max I_n:\ n\in\omega\}$. Then $A\in\I$, since for each $k\in I_n\cap A$ we have 
$$\frac{A_h([1,k])}{\omega_h([1,k])}\leq \frac{n\cdot (2^n)!}{((2^n)!)^2}\xrightarrow{n\rightarrow\infty} 0.$$
On the other hand, for $k\in I_{n+1}\cap f[A]$ we have 
$$\frac{f[A]_h[1,k]}{\omega_h[1,k]}\geq \frac{(2^{n+1})!}{((2^{n+1})!+n\cdot ((2^n)!)^2)}.$$ 
Since $\frac{(2^{n+1})!}{((2^n)!)^2}\geq 2^{2^n} $, the right hand side of the above inequality tends to $1$. Thus, $f[A]\not\in \I$ and $f$ is not bi-$\I$-invariant.
\end{proof}

\begin{theorem}
There is a dense Erd{\"o}s-Ulam ideal which does not satisfy condition $(C2)$.
\end{theorem}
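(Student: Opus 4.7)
The plan is to follow the pattern of the preceding, non-dense counterexample, but one cannot simply reuse the shift $f(n)=n+1$. Indeed, if $\eu_g$ is dense, I would first verify that necessarily $g(n)/\omega_g[1,n]\to 0$: otherwise, for some $c>0$ the infinite set $D=\{n: g(n)/\omega_g[1,n]\ge c\}$ would be a witness to non-density, since for any infinite $C\subseteq D$ one has $\mu_n(C)\ge g(n)/\omega_g[1,n]\ge c$ at every $n\in C$, hence $\limsup_n\mu_n(C)\ge c>0$ and $C\notin\eu_g$. Combined with the elementary telescoping estimate
\[
|\mu_n(A+1)-\mu_n(A)|\le \frac{2 g(n)}{\omega_g[1,n]},
\]
density of $\eu_g$ forces $A\in\eu_g\Leftrightarrow A+1\in\eu_g$, so the shift is automatically bi-$\eu_g$-invariant on every dense Erd\"os--Ulam ideal and cannot be the witness.

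Therefore the plan is to keep the block-constant pattern $g\equiv g_n$ on consecutive intervals $I_n=[t_n,t_{n+1})$, but to \emph{enlarge} the blocks so that $|I_{n-1}|$ is much larger than $g_n/g_{n-1}$ (which, as the above computation shows, is precisely what is required to make $\eu_g$ dense), and to use a more elaborate increasing injection $f$. Pick a slowly growing sequence $k_n\to\infty$ with $\sum k_n/g_n<\infty$. Define $f$ to be the identity outside a thin exceptional set $E$; on the top $k_n$ points of each $I_n$, let $f$ map those points into the first $k_n$ positions of $I_{n+1}$, pushing the original occupants of those positions forward by $k_n$ so that $f$ remains monotone. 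Choosing the parameters so that $k_n g_{n+1}$ stays comparable to $g_n|I_n|$, take $A$ to be the collection of those top $k_n$ points. Then on one hand $A\in\eu_g$ because $A_g[1,\max I_n]\approx k_n g_n\ll|I_n|g_n\approx\omega_g[1,\max I_n]$, while on the other hand $f(A)$ lives in the heads of the $I_{n+1}$'s, so
\[
\frac{f(A)_g[1,\min I_{n+1}+k_n]}{\omega_g[1,\min I_{n+1}+k_n]}\approx \frac{k_n g_{n+1}}{g_n|I_n|+k_n g_{n+1}}
\]
is bounded away from zero, giving $f(A)\notin\eu_g$. For bi-$\I_{(1/g'(n))}$-invariance one argues via telescoping: the $\I_{(1/g'(n))}$-contribution of $E$ is controlled by $\sum k_n/g'_n$, and because any nondecreasing representation $g'$ of $\eu_g$ must have block sums asymptotically comparable to those of $g$, one obtains summability for every admissible $g'$ simultaneously.

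The principal obstacle is the triple balancing act on $g_n$, $|I_n|$, and $k_n$: the density condition pushes $|I_n|$ to be large relative to the weight ratios, the $\eu_g$-invariance failure requires the new weight carried by $f(A)$ in $I_{n+1}$ to remain comparable to the accumulated mass in $I_n$, and the $\I_{(1/g'(n))}$-invariance requires the exceptional set to stay $1/g$-summable. The hardest step is the last one of the previous paragraph --- showing that bi-$\I_{(1/g'(n))}$-invariance of $f$ holds not only for the specific $g$ I construct but for \emph{every} nondecreasing representation $g'$ of $\eu_g$; here I would appeal to the fact that any two such representations have asymptotically comparable cumulative sums on the blocks $I_n$, which in turn forces the relevant summability condition on $E$ to transfer between $g$ and $g'$.
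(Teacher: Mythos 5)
Your proposal has the right general shape for producing \emph{a} witness, but it misses the logical heart of condition $(C2)$: the condition quantifies existentially over nondecreasing representations, so to refute it you must, for \emph{every} nondecreasing $g'$ with $\eu_{g'}=\I$, exhibit an increasing injection lying in exactly one of the two classes. You recognize this and propose a single uniform $f$ that fails bi-$\eu$-invariance while being bi-$\I_{(1/g'(n))}$-invariant for all representations $g'$, but the justification you offer for the uniform half --- that any two representations have ``asymptotically comparable cumulative sums on the blocks,'' hence the summability of the exceptional set transfers --- is both unproven and, as stated, insufficient. Summability of $\sum_{n\in E}1/g'(n)$ is a pointwise condition on $g'$ along $E$, not a condition on block sums: two block-constant weights can have identical normalized block profiles while one assigns much smaller values (hence much larger $1/g'$) on the tops of the blocks where your $E$ lives. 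This is exactly where the paper's proof does its real work: for an arbitrary representation $g'$ it extracts a structural growth property near the block boundaries from the equality $\eu_{g'}=\I$ (the ratios $g'(\max I_n+c2^{k_n})/g'(\max I_n)$ must tend to infinity), splits into two cases according to whether some tail set $B_b$ is $\I_{(1/g'(n))}$-small, and in the second case invokes the Abel--Dini theorem to tune the witness set to $g'$. Notably, in that second case the witness fails invariance in the \emph{opposite} direction from yours (bi-$\eu$-invariant but not bi-summable-invariant), which is a strong hint that no single exceptional-set construction of your type can be made bi-summable-invariant for all representations simultaneously; at the very least you would have to prove it, and nothing in your sketch does.

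Two smaller points. First, your preliminary observations are sound and match the paper's implicit motivation: density does force $g(n)/\omega_g[1,n]\to 0$, and the telescoping bound then makes the shift bi-$\eu_g$-invariant, so the non-dense example's witness is unavailable. Second, your $f$ cannot literally be ``the identity outside a thin exceptional set'': for any increasing injection the offset $f(x)-x$ is nondecreasing, so once you displace the top $k_n$ points of $I_n$ into $I_{n+1}$ the offset stays positive forever and accumulates to about $\sum_{j\le n}k_j$ on $I_{n+1}$. This is repairable (the paper's recursive definition $f(x)=f(x-1)+1$ handles it), and your parameters can be balanced so that $A\in\eu_g$ while $f[A]\notin\eu_g$ provided $g_{n+1}/g_n\to\infty$ (otherwise $k_n\approx g_n|I_n|/g_{n+1}$ is a constant fraction of $|I_n|$ and $A\notin\eu_g$); but these are secondary to the missing uniformity argument above.
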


\begin{proof}
Let $(k_n)_{n\in\omega}$ be a sequence defined by $k_0=0$ and 
$$k_n=\min\left\{x\in\omega:\ \frac{2^{x}}{2^{k_{n-1}}}\geq n\right\}$$
for all $n>0$. Define $I_n=[2^{k_n},2^{k_{n+1}})$ and $\I=\eu_h$, where $h\colon\omega\to\omega$ is given by $h(i)=2^{k_n}$ for all $i\in I_n$. 

Notice that for $i\in I_n$ we have
$$\frac{h(i)}{\omega_h[1,i]}\leq \frac{2^{k_n}}{(2^{k_{n-1}})^2}\leq \frac{2n}{2^{k_{n-1}}}\xrightarrow{n\rightarrow\infty}0.$$
Therefore, $\I$ is dense.

For $x\in\mathbb{R}$ by $[x]$ we denote the nearest integer to $x$. Observe that given any sequence of positive reals $(a_n)_{n\in\omega}$ bounded by $1$, if $A$ denotes the set consisting of the first $[a_n 2^{k_n}]$ elements of each $I_{n+1}$, then 
\begin{equation}
  A\in\I \Longleftrightarrow \lim_{n\to\infty} a_n=0,	\label{eq:0}
\end{equation}
since 
$$\frac{A_h(I_{n+1})}{\omega_h(I_n)}=\frac{ a_n \cdot 2^{k_n}\cdot 2^{k_{n+1}}}{2^{k_n}\cdot (2^{k_{n+1}}-2^{k_n}) }\approx a_n \cdot \frac{n+1}{n},  $$
while $\frac{A_h(I_n)}{\omega_h(I_n)}\leq \frac{a_n}{n}$.

Let $g : \omega \to [0,\infty)$ be a nondecreasing function such that $\I=\eu_g $ and let $\J=\I_{(1/g(n))}$ be the appropriate summable ideal. Consider sets $B_b$ for $b\in(0,1]$ consisting of the last $[b 2^{k_n}]$ elements of each interval $I_n$. We have two cases:
\begin{itemize}
\item[1.]  there is $b\in(0,1]$ such that $B_b\in\J$,
\item[2.]  $B_b\not\in\J$ for all $b\in (0,1]$.
\end{itemize}

{\bf Case 1.: }Consider the set $C=B_b\in \J$ and define $f\colon\omega\to\omega$ by 
$$f(x)=\left\{\begin{array}{ll}
0, & \textrm{if } x=0,\\
2^{k_{n+1}}, & \textrm{if } x=\min(I_n\cap C)\textrm{ for some }n,\\
f(x-1)+1, & \textrm{otherwise}.
\end{array}\right.$$

Observe that $f[C]$ consists of the first $[b 2^{k_n}]$ elements of each $I_{n+1}$. Hence, $f[C]\not\in\I$ by (\ref{eq:0}). On the other hand, $\frac{C_h(I_n)}{\omega_h(I_n)}\leq \frac{b}{n}$, so $C\in\I$. Thus, $f$ is not bi-$\I$-invariant.

Since the sequence $\left(\frac{1}{g(n)}\right)_{n\in\omega}$ is nonincreasing and $f$ is increasing, $f$ is obviously $\J$-invariant. We will show that $f^{-1}$ is $\J$-invariant as well. Set $D\not\in \J$. We only need to see that 
$$\sum_{i\in f[D]} \frac{1}{g(i)}\geq  \sum_{i\in D} \frac{1}{g(i)} - \sum_{i\in C} \frac{1}{g(i)},$$
since $\left(\frac{1}{g(n)}\right)_{n\in\omega}$ is nonincreasing and $\sum_{i\in C} \frac{1}{g(i)}$ is convergent. Therefore, $f[D]\not\in\J$ whenever $D\not\in \J$.

{\bf Case 2.: }
First, observe that for all $b\in(0,1]$ the ratio 
$$\frac{(B_b+b2^{k_n})_g(I_{n+1})}{(B_b)_g(I_n)}$$ 
has to tend to infinity in order to maintain $\I=\eu_g$. Indeed, if it would be bounded on some subsequence $(i_n)_{n\in\omega}$, then each set formed of some first elements of each $I_{i_n}$ would be in $\eu_g$ if and only if appropriate subset of $B_b$ would be in $\eu_g$. That is not the case for $\I$, since $B_b\in \I$ for all $b\leq 1$ while $\bigcup_{n\in\omega}((B_b\cap I_n)+b2^{k_n})\not\in \I $ by (\ref{eq:0}) for all $b\leq 1$. Therefore, for all $M>0$ and $c\in (0,1]$, there is such $N \in \omega$ that for all $n\geq N$ we have $\frac{g(\max I_n + c2^{k_n})}{g(\max I_n)}\geq M$.

Next we find such a nonincreasing sequence $(b_n)_{n\in\omega}$ tending to $0$ that the set $B$ consisting of the last $[b_n 2^{k_n}]$ elements of each interval $I_n$ does not belong to $\J$ while the sequence $(M_n)_{n\in\omega}$, where $M_n=\frac{g(\max I_n + b_n 2^{k_n})}{g(\max I_n)}$ for each $n\in\omega$, tends to infinity.

Due to the Abel-Dini Theorem \cite[Theorem 173]{Knopp}, which says that when $\sum_{n\in\omega}x_n$ diverges, then $\sum_{n\in\omega}\frac{x_n}{(x_1+\ldots +x_n)^{1+\delta}}$ converges for all $\delta>0$, we can find a sequence $(c_n)_{n\in\omega}$ such that $\sum_{n\in\omega}c_n B_{1/g}(I_n)$ diverges while $\sum_{n\in\omega}\frac{c_n B_{1/g}(I_n)}{M_n}$ converges. To do that, we only need to make sure that $(c_1 B_{1/g}(I_1) +\ldots +c_n B_{1/g}(I_n))^2$ tends to infinity and is not greater than $M_n$ at the same time.

Define the set $C$ as a union of the first $[c_n |B\cap I_n|]$ elements of each $B\cap I_n$. Let $f\colon\omega\to\omega$ be given by 
$$f(x)=\left\{\begin{array}{ll}
0, & \textrm{if } x=0,\\
2^{k_{n+1}}+[b_n 2^{k_n} ]+[(1-c_n)|B\cap I_n|], & \textrm{if } x=\min(I_n\cap C)\textrm{ for some }n,\\
f(x-1)+1, & \textrm{otherwise}.
\end{array}\right.$$

Observe that since $\sum_{n\in\omega} C_{1/g}(I_n)\geq \sum_{n\in\omega}c_n B_{1/g}(I_n)$, the set $C$ does not belong to $\J$. Moreover, we can see that $\sum_{n\in\omega} f(C)_{1/g}(I_{n+1})\leq \sum_{n\in\omega}\frac{c_n B_{1/g}(I_n)}{M_n}$, thus $f[C]\in \J$. Hence, $f$ is not bi-$\J$-invariant. 

To show that $f$ is bi-$\I$-invariant, pick $D\subseteq \omega$. Both $B$ and $f[B]$ belong to $\I$ (by (\ref{eq:0})), so without loss of generality we may assume that $D\cap B=\emptyset$. Firstly, consider the case that $D\in \I$. It is easy to see that $f[D]\in\I$, since for $i\in D\cap I_n$ we have $f(i)\in I_n$ and the function $h$ is constant on $I_n$. On the other hand, if $D\not \in \I$ then there are $\alpha>0$ and infinitely many $i\in\omega$ such that $\frac{D_h[1,i]}{\omega_h[1,i]}>\alpha $. Then for such $i\in I_n$ we have
$$\frac{f[D]_h[1,i]}{\omega_h[1,i]}\geq  \frac{D_h[1,i-2b_n 2^{k_{n-1}}]}{\omega_h[1,i]}> \alpha - \frac{2b_n \cdot 2^{k_{n-1}}\cdot 2^{k_{n}} }{2^{k_{n-1}}\cdot (2^{k_{n}} - 2^{k_{n-1}} )}.$$
To finish the proof it remains to notice that the right hand side of the above inequality is greater than $\alpha/2$ for almost all $n$. Therefore, $f[D]\not\in\I$.
\end{proof}

We end this section with an example of a "nice" anti-homogeneous ideal. All examples of anti-homogeneous ideals presented in Section \ref{maximal} were not "nice" (i.e., they were not Borel or even analytic). The ideal presented below is an Erd{\"o}s-Ulam anti-homogeneous ideal. 

\begin{theorem}
\label{ex}
There is an Erd{\"o}s-Ulam anti-homogeneous ideal.
\end{theorem}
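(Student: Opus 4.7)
The plan is to construct an Erd\"os--Ulam ideal $\I = \eu_g$ from a nondecreasing weight $g$ with a rigid block structure and then verify that $H(\I) = \I^\star$. Since every EU ideal is dense and hence admissible, Proposition \ref{zawierania}(b) already delivers $\I^\star \subseteq H(\I)$; the content of the theorem is the reverse inclusion, namely that no $A \in \I^+$ with $A^c \in \I^+$ admits a bijection $\pi\colon\omega \to A$ witnessing $\I \cong \I|A$.

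\textbf{Construction.} I would partition $\omega$ into consecutive intervals $(I_n)_{n \in \omega}$ and define $g$ to be a constant $w_n$ on each $I_n$, with lengths $\ell_n = |I_n|$ and weights $w_n$ chosen so that the normalized block contribution $\alpha_n := w_n \ell_n/\omega_g[1,\max I_n]$ tends to $1$, making each block eventually dominate the preceding initial segment (this is in the spirit of the weight $h(i) = 2^{k_n}$ on $I_n = [2^{k_n},2^{k_{n+1}})$ used in the previous theorem). Tuning $k_n$ large enough relative to $w_n/w_{n-1}$ secures density of $\I$, and ensures that $\I$-membership is essentially controlled by the sequence of block ratios $A_g(I_n)/\omega_g(I_n)$: $A \in \I$ exactly when these ratios tend to zero, while $A \in \I^+$ with $A^c \in \I^+$ forces both of these block-density sequences to remain bounded away from zero infinitely often.

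\textbf{Detecting non-isomorphism.} Given such an $A$ and a hypothetical isomorphism $\pi\colon\omega\to A$, I would track how $\pi$ must distribute the blocks $I_n$ of $\omega$ into $A$. The goal is to extract an invariant that every isomorphism of $\I$ must preserve—say, the "asymptotic weight profile" witnessed by a canonical sequence of sets generating the $\I$-null and co-null classes—and to show that this invariant changes when passing to $\I|A$. The strategy is to arrange the $\alpha_n$ to follow a precisely chosen oscillation pattern rather than a smooth asymptotic law, so that any bijective reshuffling $\pi$ compatible with $\I$-membership would have to reproduce that pattern inside $A$, whereas the missing mass in $A^c \in \I^+$ destroys enough weight in infinitely many blocks to make reproduction impossible.

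\textbf{Main obstacle.} The hardest point is that an EU-isomorphism is a very weak constraint: $\pi$ may freely mix indices across different blocks $I_n$ and need not send intervals to intervals. The argument therefore requires pinning down a genuinely structural invariant of $\I$—invariant under arbitrary bijective reshuffling preserving $\I$-membership—that is nevertheless sensitive to the deletion of any $\I$-positive set. Calibrating $(w_n,\ell_n)$ so that this invariant actually discriminates $\I$ from each of its proper positive restrictions $\I|A$ is the technical heart of the proof, and I expect it will demand a careful combinatorial rigidity argument rather than a soft limit-based computation.
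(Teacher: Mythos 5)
Your setup is sound and your construction is essentially the one the paper uses (blocks $I_n$ on which the weight is constant and whose normalized mass dominates the preceding initial segment; the paper takes $|I_n|=n!$ with the normalized counting measure $\varphi_n$ on $I_n$, so that $\I=\{A:\lim_n\varphi_n(A)=0\}$ is a density ideal, hence Erd\H{o}s--Ulam by Farah's theorem). The reduction to showing that no $A$ with $A,A^c\in\I^+$ lies in $H(\I)$ is also correct. But the entire content of the theorem sits in your paragraphs ``Detecting non-isomorphism'' and ``Main obstacle,'' and there you only announce that some invariant, to be calibrated later, should do the job; no invariant is exhibited and no argument is given. As written this is a genuine gap: the proposal stops exactly where the proof has to begin, and you say so yourself (``I expect it will demand a careful combinatorial rigidity argument'').

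The missing idea is a cardinality-counting argument, not an oscillation-pattern invariant, and it is worth recording because it also shows you need not treat an arbitrary positive--co-positive $A$. Given $B\notin\I\cup\I^\star$, choose $M$ and an infinite $T$ with $\varphi_n(B)\ge 1/M$ for $n\in T$, delete from each $I_n$ ($n\in T$) a subset $A_n\subseteq I_n\cap B$ of exactly $n!/M$ points, and let $A=\omega\setminus\bigcup_{n\in T}A_n\supseteq B^c$; by Theorem \ref{supersets} it suffices to show $A\notin H(\I)$. If $f\colon\omega\to A$ were an isomorphism, split $\omega$ into $\omega^-$ (points of $I_n$ sent below $\min I_n$), $\omega^=$ (sent into $I_n$), and $\omega^+$ (sent above $\max I_n$). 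Since $|\bigcup_{k<n}I_k|\le 2(n-1)!$, injectivity gives $\varphi_n(\omega^-)<2/n$, so $\omega^-\in\I$; injectivity also gives $\varphi_n(\omega^=)\le\varphi_n(A)\le 1-1/M$ for $n\in T$, so $\omega^=\notin\I^\star$; hence $\omega^+\notin\I$. But $f[\omega^+]\cap I_n\subseteq f[\bigcup_{k<n}I_k]$, so $\varphi_n(f[\omega^+])<2/n$ and $f[\omega^+]\in\I$ --- contradicting bi-invariance of $f$. This is exactly the ``rigidity under arbitrary reshuffling'' you were looking for: it uses only that a bijection cannot compress more than $|I_n|$ points into $I_n$, together with the factorial growth of the blocks. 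Without this (or an equivalent) step, your proposal does not establish the theorem.
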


\begin{proof}
Let $(I_n)_{n\in\omega}$ be a family of consecutive intervals such that each $I_n$ has length $n!$. Let also $(\varphi_n)_{n\in\omega}$ be a family of measures on $\omega$ given by:
$$\varphi_n(\{k\})=\left\{\begin{array}{ll}
\frac{1}{n!}, & \textrm{if } k\in I_n\\
0, & \textrm{if } k\in\omega\setminus I_n.
\end{array}\right.$$
Consider the ideal $\I=\{A\subset\omega:\ \lim_{n\to\infty}\varphi_n(A)=0\}$. This is a density ideal (in the sense of Farah, cf. \cite[Chapter 1.13]{Farah}). Moreover, it is an Erd{\"o}s-Ulam ideal by \cite[Theorem 1.13.3]{Farah}. We will show that $\I$ is anti-homogeneous.

Take any $B\notin\I\cup\I^\star$. Then there are $M>0$ and some infinite $T\subset\omega\setminus M$ such that $\varphi_n(B)\geq\frac{1}{M}$ for all $n\in T$. For each $n\in T$ pick $A_n\subset I_n\cap B$ such that $|A_n|=\frac{n!}{M}$. We will show that $A=\omega\setminus\bigcup_{n\in T}A_n$ is not in $H(\I)$. By Theorem \ref{supersets} it will follow that $B^c\notin H(\I)$.

Assume that $A\in H(\I)$ and $f\colon\omega\to A$ is a bijection witnessing it. Denote: 
\begin{eqnarray}
\omega^+ & = & \bigcup_{n\in\omega}\{x\in I_n:\ f(x)>\max I_n\}, \nonumber \\
\omega^- & = & \bigcup_{n\in\omega}\{x\in I_n:\ f(x)<\min I_n\}, \nonumber \\
\omega^= & = & \bigcup_{n\in\omega}\{x\in I_n:\ f(x)\in I_n\}. \nonumber 
\end{eqnarray}
Observe that $\varphi_n(\omega^-)=\varphi_n(\omega^-\cap I_n)<\frac{2}{n}$ for each $n$. Hence, $\omega^-\in\I$. Note also that $\omega^=\subseteq A\notin\I^\star$. Therefore, $\omega^+\notin\I$.

On the other hand, 
$$\varphi_n\left(f\left[\omega^+\right]\right)=\varphi_n\left(f\left[\bigcup_{k<n}I_k\cap\omega^+\right]\right)<\frac{2}{n}$$
for each $n$. Hence, $f[\omega^+]\in\I$ which contradicts the fact that $f$ witnesses $A\in H(\I)$. Therefore, $A\notin H(\I)$.
\end{proof}

\section{Ideal convergence}

In this section we consider ideal convergence and answer \cite[Question 3]{Inv} and \cite[Question 4]{Inv}.

Let $\I$ be an ideal on $\omega$. Recall that a sequence of reals $(x_n)_{n\in \omega}$ is \emph{$\I$-convergent} to $x\in \R$, if $\{n\in \omega:\ |x_n-x|\geq \varepsilon\}\in \I$ for any $\varepsilon>0$. We say that an ideal $\I$ on $\omega$ satisfies condition $(C3)$, if for any sequence $(x_n)_{n\in\omega}$ of reals $\I$-convergence of $(x_n)_{n\in\omega}$ to some $x\in\mathbb{R}$ implies convergence of $ (x_{f(n)})_{n\in\omega} $ to $x$ (in the classical sense) for some bi-$\I$-invariant injection $f$. \cite[Question 3]{Inv} asks about characterization of ideals satisfying condition $(C3)$.

It is easy to see that $\fin \oplus \cP(\omega)$ does not satisfy condition $(C3)$.

Recall that an ideal $\I$ is a \emph{P-ideal} if for every $(X_n)_{n\in \omega}\subseteq\mathcal{I}$ there is $X\in\mathcal{I}^\star$ with $X\cap X_n$ finite for all $n\in\omega$. Similarly, $\mathcal{I}$ is a \emph{weak P-ideal} if for every $(X_n)_{n\in \omega}\subseteq\mathcal{I}$ there is $X\notin\mathcal{I}$ with $X\cap X_n$ finite for all $n\in\omega$.

\begin{proposition}[{\cite[Proposition 22]{Inv}}]
All admissible P-ideals satisfy condition $(C3)$.
\end{proposition}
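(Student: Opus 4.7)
The plan is to translate condition $(C3)$ into a statement about the homogeneity family, produce the required set using the P-ideal property, and invoke admissibility to land inside $H(\I)$.

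First, I would recall the Remark stating that $A\in H(\I)$ if and only if there is a bi-$\I$-invariant injection $f\colon\omega\to\omega$ with $f[\omega]=A$. If $f$ enumerates a set $A\subseteq\omega$ (in any order, since $f$ is a bijection between $\omega$ and $A$), then the classical convergence of $(x_{f(n)})_{n\in\omega}$ to $x$ is equivalent to the finiteness of $\{a\in A:\ |x_a-x|\geq\varepsilon\}$ for every $\varepsilon>0$. Therefore, to verify condition $(C3)$ for an $\I$-convergent sequence $(x_n)\to x$, it is enough to produce a set $A\in H(\I)$ such that for every $\varepsilon>0$ the set $\{a\in A:\ |x_a-x|\geq\varepsilon\}$ is finite.

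Next, I would use the P-ideal property. For each $k\in\omega\setminus\{0\}$ put $X_k=\{n\in\omega:\ |x_n-x|\geq 1/k\}$. Since $(x_n)$ is $\I$-convergent to $x$, each $X_k$ belongs to $\I$. As $\I$ is a P-ideal, there is some $X\in\I^\star$ with $X\cap X_k$ finite for every $k$. Given $\varepsilon>0$, choose $k$ with $1/k<\varepsilon$; then $\{a\in X:\ |x_a-x|\geq\varepsilon\}\subseteq X\cap X_k$, which is finite. So $A=X$ satisfies the required finiteness condition.

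Finally, because $\I$ is admissible, Proposition \ref{zawierania}(b) gives $\I^\star\subseteq H(\I)$, so $A\in H(\I)$. By the Remark above, there is a bi-$\I$-invariant injection $f\colon\omega\to\omega$ with $f[\omega]=A$, and the finiteness established in the previous paragraph forces $(x_{f(n)})_{n\in\omega}$ to converge classically to $x$. This shows that $\I$ satisfies condition $(C3)$.

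There is no serious obstacle here; the only subtlety is the routine observation that, since $f$ is a bijection from $\omega$ onto $A$, classical convergence of $(x_{f(n)})$ to $x$ does not depend on the particular enumeration provided by $f$, and is equivalent to $\{a\in A:\ |x_a-x|\geq\varepsilon\}$ being finite for every $\varepsilon>0$. The whole argument rests on the P-ideal property applied to the level sets $X_k$ together with Proposition \ref{zawierania}(b) and the correspondence between bi-$\I$-invariant injections and members of $H(\I)$.
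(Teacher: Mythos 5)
Your argument is correct. The paper itself only cites this proposition from \cite{Inv} without reproducing a proof, but your route is exactly the natural one and coincides with what the paper's later characterization of condition $(C3)$ encodes: you apply the P-ideal property to the level sets $X_k=\{n:\ |x_n-x|\geq 1/k\}$ to get $X\in\I^\star$ almost disjoint from all of them, use admissibility via Proposition \ref{zawierania}(b) to conclude $X\in H(\I)$, and then invoke the correspondence between members of $H(\I)$ and ranges of bi-$\I$-invariant injections. All the individual steps (including the observation that classical convergence of $(x_{f(n)})$ is enumeration-independent and equivalent to finiteness of $\{a\in f[\omega]:\ |x_a-x|\geq\varepsilon\}$ for each $\varepsilon$) check out.
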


\begin{proposition}[{\cite[Proposition 23]{Inv}}]
All ideals that are not weak P-ideals do not satisfy condition $(C3)$.
\end{proposition}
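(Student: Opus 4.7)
The plan is to fail condition $(C3)$ directly by producing a single $\I$-convergent sequence that becomes non-convergent after every bi-$\I$-invariant re-indexing. The input is the failure of the weak P-ideal property: there exists $(X_n)_{n\in\omega}\subseteq\I$ such that every $X\notin\I$ satisfies $X\cap X_n$ infinite for some $n$.

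First I would normalize the witnessing family. Replacing $X_n$ by $\bigcup_{k\leq n}X_k$, I may assume $(X_n)$ is increasing (this stays in $\I$ by closure under finite unions, and the same witnessing property persists). Setting $X_{-1}=\emptyset$ and $Y_n=X_n\setminus X_{n-1}$ for $n\geq 1$, I get pairwise disjoint sets with $\bigcup_{n\geq 1}Y_n=\bigcup_{n}X_n$. I then define the test sequence
$$x_k=\begin{cases}1/n & \text{if }k\in Y_n,\\ 0 & \text{otherwise.}\end{cases}$$
For any $\varepsilon>0$, the level set $\{k:|x_k|\geq\varepsilon\}$ is contained in $X_N$ for $N=\lfloor 1/\varepsilon\rfloor$, hence lies in $\I$. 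So $(x_k)$ is $\I$-convergent to $0$.

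Next I would fix an arbitrary bi-$\I$-invariant injection $f\colon\omega\to\omega$ and show $(x_{f(m)})_{m\in\omega}$ does not converge to $0$. The crucial observation is $f[\omega]\notin\I$: otherwise bi-invariance would give $\omega=f^{-1}[f[\omega]]\in\I$, a contradiction. Applying the witnessing property of $(X_n)$ to $f[\omega]$ produces an $N$ such that $Z:=f[\omega]\cap X_N$ is infinite. But every element $k\in Z$ lies in some $Y_n$ with $1\leq n\leq N$, so $x_k\geq 1/N$. Since $f$ is injective, the set $\{m\in\omega:f(m)\in Z\}$ is infinite, giving infinitely many $m$ with $x_{f(m)}\geq 1/N$. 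Hence $(x_{f(m)})$ does not converge to $0$ classically, and $(C3)$ fails for this sequence.

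I do not expect a genuine obstacle here: the argument is short, and the only step requiring care is the reduction to an increasing chain, which is a routine closure-under-finite-unions observation. The conceptual heart of the proof is the single identity $f[\omega]\notin\I$ extracted from bi-invariance, which lets the non-weak-P witness $(X_n)$ act on the image of $f$.
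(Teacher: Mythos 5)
Your argument is correct and matches the paper's approach: the paper obtains this statement from its characterization of condition $(C3)$, whose proof of the harder implication uses exactly your test sequence (value $1/n$ on the $n$-th layer of the witnessing family) together with the observation that $f[\omega]\notin\I$ for every bi-$\I$-invariant injection $f$, so the non-weak-P witness can be applied to $f[\omega]$. One bookkeeping slip to fix: taking $Y_n=X_n\setminus X_{n-1}$ only for $n\geq 1$ leaves $X_0$ in the ``otherwise'' case with value $0$, and a priori the witnessing $N$ might only give you infinitely many points of $f[\omega]$ inside $X_0$; start the decomposition at $n=0$ and assign $X_0$ the value $1$ (or re-index the family from $1$) so that every element of $X_N$ genuinely receives a value at least $1/(N+1)$.
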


Now we give a characterization of ideals satisfying condition $(C3)$.

\begin{proposition}
The following are equivalent for any admissible ideal $\I$ on $\omega$:
\begin{itemize}
	\item[(a)] $\I$ satisfies condition $(C3)$;
	\item[(b)] for every countable family $\{A_n:\ n\in\omega\}\subset\I$ there exists such $A\in H(\I)$ that $A\cap A_n$ is finite for every $n\in\omega$.
\end{itemize}
\end{proposition}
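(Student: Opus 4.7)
My plan is to set up a correspondence between $\I$-convergent sequences of reals and countable subfamilies of $\I$: the ``level sets'' $A_n=\{k\in\omega:|x_k-x|\geq\frac{1}{n+1}\}$ on one side, and on the other side a sequence obtained by letting $x_k=\frac{1}{n+1}$ for $k\in B_n\setminus B_{n-1}$ (where $B_n=A_0\cup\cdots\cup A_n$ and $B_{-1}=\emptyset$) and $x_k=0$ off $\bigcup_n B_n$. Under this correspondence the earlier remark, that $A\in H(\I)$ if and only if $A=f[\omega]$ for some bi-$\I$-invariant injection $f$, translates classical convergence of $(x_{f(k)})$ into finiteness of $A\cap A_n$ for every $n$.

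For (a)$\Rightarrow$(b), starting from $\{A_n\}\subseteq\I$, I will encode it as the sequence $(x_k)$ described above. Since $\{k\in\omega:|x_k|\geq\frac{1}{N+1}\}=B_N\in\I$, this sequence is $\I$-convergent to $0$. Applying (a) produces a bi-$\I$-invariant injection $f$ with $(x_{f(k)})\to 0$ classically; set $A=f[\omega]$, which belongs to $H(\I)$ by the remark. Because $f(k)\in A_n$ forces $f(k)\in B_n$ and hence $x_{f(k)}\geq\frac{1}{n+1}$, only finitely many $k$ satisfy $f(k)\in A_n$, and injectivity of $f$ transfers this to finiteness of $A\cap A_n$.

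For (b)$\Rightarrow$(a), I start from an $\I$-convergent sequence $(x_k)$ with limit $x$, take its level sets $A_n\in\I$, and use (b) to obtain $A\in H(\I)$ with every $A\cap A_n$ finite. The remark yields a bi-$\I$-invariant injection $f\colon\omega\to\omega$ with $f[\omega]=A$. For each $\varepsilon>0$, choose $N$ with $\frac{1}{N+1}<\varepsilon$; then $\{k\in\omega:|x_{f(k)}-x|\geq\varepsilon\}\subseteq f^{-1}[A\cap A_N]$, which is finite by injectivity. Hence $(x_{f(k)})$ converges classically to $x$.

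The genuine difficulty is minimal: the only bookkeeping issue is to pass to the nested unions $B_n$ when defining the test sequence in (a)$\Rightarrow$(b), so that the value of $x_k$ is unambiguous (and points outside $\bigcup_n B_n$ can be safely assigned $x_k=0$). The admissibility hypothesis enters only through the remark relating $H(\I)$ to bi-invariant injections, which is exactly where ideals like $\fin\oplus\cP(\omega)$ would otherwise fail to produce enough bi-invariant injections to realize $A$ as an image.
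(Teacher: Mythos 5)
Your proof is correct and follows essentially the same route as the paper: the identical test sequence (constant $\frac{1}{n+1}$ on $A_n\setminus(A_0\cup\dots\cup A_{n-1})$, zero elsewhere) for (a)$\Rightarrow$(b), and the level sets $A_k=\{n:|x_n-x|>\frac1k\}$ together with the remark identifying $H(\I)$ with images of bi-$\I$-invariant injections for (b)$\Rightarrow$(a). The only cosmetic difference is that you argue (a)$\Rightarrow$(b) directly where the paper argues by contradiction.
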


\begin{proof}
\textbf{(a) $\Rightarrow$ (b): }Pick a countable family $\{A_n:\ n\in\omega\}\subset\I$ and suppose that for every $A\in H(\I)$ there is some $n\in\omega$ with $A\cap A_n$ infinite. Consider the sequence $(x_n)_{n\in\omega}$ defined by 
$$x_n=\left\{\begin{array}{ll}
\frac{1}{k}, & \textrm{if } n\in A_k\setminus \bigcup_{m<k}A_m \textrm{ for some }k,\\
0, & \textrm{otherwise}.
\end{array}\right.$$
Then the sequence $(x_n)_{n\in\omega}$ is clearly $\I$-convergent to $0$. However, for every bi-$\I$-invariant injection $f$ there is such $k\in\omega$ that $f[\omega]\cap A_k$ is infinite (since $f[\omega]\in H(\I)$). Thus, there are infinitely many elements $n\in\omega$ for which $x_{f(n)}\geq 1/k$. Therefore $(x_{f(n)})_{n\in\omega}$ is not convergent to $0$.

\textbf{(b) $\Rightarrow$ (a): }Let $(x_n)_{n\in\omega}$ be a sequence of reals $\I$-convergent to some $x\in\mathbb{R}$. Define $A_k=\{n\in\omega:\ |x_n-x|>\frac{1}{k}\}$. We can find such bi-$\I$-invariant injection $f$ that $f[A] \cap A_k $ is finite for every $k$. Then $(x_{f(n)})_{n\in\omega}$ is convergent to $x$.
\end{proof}

\begin{remark}
An admissible homogeneous ideal satisfies condition $(C3)$ if and only if it is a weak P-ideal. Moreover, an anti-homogeneous ideal satisfies condition $(C3)$ if and only if it is a P-ideal.
\end{remark}

Now we move to another problem. It is known that for a sequence of reals $(x_n)_{n\in\omega}$ and some $x\in\mathbb{R}$, if any sequence of indices $(n_k)_{k\in\omega}$ contains a subsequence $(n_{k_l})_{l\in\omega}$ such that $(x_{n_{k_l}})_{l\in\omega}$ converges to $x$, then the whole sequence $(x_n)_{n\in\omega}$ converges to $x$ as well. We are interested in ideals for which ideal version of the above fact holds. Namely, say that an ideal $\I$ on $\omega$ satisfies condition $(C4)$ if for any sequence $(x_n)_{n\in\omega}$ of reals and $x\in\mathbb{R}$ the fact that for every bi-$\I$-invariant $f\colon\omega \to \omega$ there is a bi-$\I$-invariant $g\colon\omega \to \omega$ such that $(x_{g(f(n))})$ is $\I$-convergent to $x$ implies that $(x_n)_{n\in\omega}$ is $\I$-convergent to $x$. \cite[Question 4]{Inv} concerns characterization of ideals satisfying condition $(C4)$. In \cite{Inv} it is pointed out that condition $(C1)$ implies condition $(C4)$.

\begin{proposition}
The following are equivalent for any ideal $\I$ on $\omega$:
\begin{itemize}
	\item[(a)] $\I$ satisfies condition $(C4)$; 
	\item[(b)] $\I$ is anti-homogeneous.
\end{itemize}
\end{proposition}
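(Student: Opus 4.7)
\emph{Plan.} The plan hinges on the remark identifying bi-$\I$-invariant injections $f\colon\omega\to\omega$ with bijective witnesses to $f[\omega]\in H(\I)$.

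For the implication (b)$\Rightarrow$(a), assume $\I$ is anti-homogeneous and let $(x_n)_{n\in\omega}$, $x\in\R$ satisfy the hypothesis of $(C4)$. Applying it with $f=\mathrm{id}_\omega$ produces a bi-$\I$-invariant injection $g\colon\omega\to\omega$ such that $(x_{g(n)})_{n\in\omega}$ is $\I$-convergent to $x$. Then $g[\omega]\in H(\I)=\I^\star$, whence $\omega\setminus g[\omega]\in\I$. For each $\varepsilon>0$ the set $E_\varepsilon=\{n:|x_{g(n)}-x|\ge\varepsilon\}$ lies in $\I$, and bi-$\I$-invariance of $g$ gives $g[E_\varepsilon]=\{m\in g[\omega]:|x_m-x|\ge\varepsilon\}\in\I$. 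Combined with $\{m\in\omega\setminus g[\omega]:|x_m-x|\ge\varepsilon\}\subseteq\omega\setminus g[\omega]\in\I$, this yields $\{m:|x_m-x|\ge\varepsilon\}\in\I$, so $(x_n)$ is $\I$-convergent to $x$.

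For the contrapositive of (a)$\Rightarrow$(b), assume $\I$ is not anti-homogeneous; the goal is to produce a sequence witnessing the failure of $(C4)$. The first task is to locate some $A\in H(\I)\setminus\I^\star$. When $\I$ is admissible, Proposition \ref{zawierania}(b) gives $\I^\star\subseteq H(\I)$, so non-equality forces $H(\I)\supsetneq\I^\star$ and such $A$ exists. When $\I\cong\fin\oplus\cP(\omega)$ one can exhibit $A$ explicitly: on the ground set $\{0,1\}\times\omega$ the set $A=\{0\}\times 2\omega\cup\{1\}\times\omega$ satisfies $\I|A\cong\I$ (both columns of $A$ are infinite), while $A^c=\{0\}\times(2\omega+1)\notin\I$. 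Fix such $A$ and let $\phi\colon\omega\to A$ be a bi-$\I$-invariant bijection witnessing $A\in H(\I)$, viewed as a bi-$\I$-invariant injection $\omega\to\omega$ with image $A$.

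Now define $x_n=0$ for $n\in A$ and $x_n=1$ for $n\in A^c$. Since $A\notin\I$ (as $H(\I)\subseteq\I^+$) and $A^c\notin\I$ (as $A\notin\I^\star$), the sequence $(x_n)$ is not $\I$-convergent to any real. Yet for an arbitrary bi-$\I$-invariant injection $f$ the choice $g=\phi$ gives a bi-$\I$-invariant injection $g\circ f=\phi\circ f$ (composition of bi-$\I$-invariant injections) whose image is contained in $A$, so $x_{g(f(n))}=0$ for every $n$ and $(x_{g(f(n))})_{n\in\omega}$ is $\I$-convergent to $0$. The hypothesis of $(C4)$ is therefore satisfied for $x=0$ while the conclusion fails, so $(C4)$ does not hold. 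The only mild subtlety in the whole argument is the brief case split when locating $A\in H(\I)\setminus\I^\star$; the rest is a direct chase through the definitions.
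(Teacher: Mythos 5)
Your proof is correct and follows essentially the same route as the paper: for (a)$\Rightarrow$(b) you build the same characteristic-function sequence from a set $A\in H(\I)\setminus\I^\star$ and use a witnessing bijection as the universal $g$; for (b)$\Rightarrow$(a) you use that $g[f[\omega]]\in\I^\star$ and that $\I$-convergence on a dual-filter set gives $\I$-convergence everywhere (you spell this out more explicitly than the paper, and instantiating only at $f=\mathrm{id}$ is legitimate since the hypothesis of $(C4)$ is universal in $f$). Your extra case split for $\I\cong\fin\oplus\cP(\omega)$ is a point the paper glosses over (for non-admissible ideals one must check that $H(\I)\ne\I^\star$ actually yields a set in $H(\I)\setminus\I^\star$ rather than only one in $\I^\star\setminus H(\I)$), but your concrete witness has the coordinates swapped relative to the paper's definition: with $\fin\oplus\cP(\omega)=\{A:\{n:(1,n)\in A\}\in\fin\}$, your set $A=\{0\}\times 2\omega\cup\{1\}\times\omega$ has complement $\{0\}\times(2\omega+1)$, which \emph{is} in the ideal, so $A\in\I^\star$; the correct witness is $A=\{0\}\times\omega\cup\{1\}\times 2\omega$, whose complement $\{1\}\times(2\omega+1)$ is not in the ideal while $\I|A\cong\I$ still holds. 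This is an immediate fix and does not affect the structure of the argument.
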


\begin{proof}
\textbf{(a) $\Rightarrow$ (b): }Suppose that $\I$ is not anti-homogeneous, i.e., there is $A\in H(\I)\setminus\I^\star$. Let $g\colon\omega \to A$ be the isomorphism witnessing that $\I|A\cong\I$. Then $g$ is bi-$\I$-invariant. Define 
$$x_n=\left\{\begin{array}{ll}
1, & \textrm{if } n\in A,\\
0, & \textrm{if } n\in \omega\setminus A.
\end{array}\right.$$
Then $(x_n)_{n\in\omega}$ is clearly not $\I$-convergent. On the other hand, for every bi-$\I$-invariant $f\colon\omega \to \omega$ we have $g[f[\omega]]\subseteq A$. Thus, $x_{g(f(n))}$ is constant, and hence convergent, sequence. A contradiction with the assumption. Therefore, $\I$ is anti-homogeneous.

\textbf{(b) $\Rightarrow$ (a): }Since $\I$ is anti-homogeneous, for every bi-$\I$-invariant functions $f,g\colon\omega \to \omega$ we have $g[f[\omega]]\in\I^\star$. To finish the proof it is sufficient to observe that if $(x_n)_{n\in A}$ is $\I$-convergent to $x$ for some $A\in \I^\star$, then $(x_n)_{n\in\omega}$ is obviously $\I$-convergent to $x$ as well.
\end{proof}

Notice that $\I_d$ does not satisfy condition $(C4)$ (since $\I_d|\{0,2,4,\ldots\}\approx \I_d$). However, we can change the domain and codomain of $g$ and investigate a slightly weaker property. We say that an ideal $\I$ on $\omega$ satisfies condition $(C5)$, if for any sequence $(x_n)_{n\in\omega}$ of reals and $x\in\mathbb{R}$ the fact that for every bi-$\I$-invariant $f\colon\omega \to \omega$ there is a bi-$\I$-invariant $g\colon f[\omega] \to f[\omega]$ such that $(x_{g(f(n))})$ is $\I$-convergent to $x$ implies that $(x_n)_{n\in\omega}$ is $\I$-convergent to $x$.

\begin{remark}
Ideal $\fin \oplus \cP(\omega)$ satisfies condition $(C5)$. However, it does not satisfy condition $(C4)$.
\end{remark}

\begin{proposition}
The following are equivalent for any admissible ideal $\I$ on $\omega$:
\begin{itemize}
	\item[(a)] $\I$ satisfies condition $(C5)$;
	\item[(b)] for every $A\not\in\I\cup \I^\star$ there is $B\in H(\I)$ such that for all $C\in H(\I)$ with $C\subseteq B$ we have $A\cap C \not\in \I$.
\end{itemize}
\end{proposition}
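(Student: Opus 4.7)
The plan is to first recast condition $(C5)$ in terms of the homogeneity family. By the remark following the definition of $H(\I)$, a bi-$\I$-invariant injection $f\colon\omega\to\omega$ is exactly a bi-$\I$-invariant bijection onto an arbitrary $D\in H(\I)$; relativizing the same remark to $\I|D\cong\I$ and using that $\I|E=(\I|D)|E$ for $E\subseteq D$, a bi-$\I$-invariant injection $g\colon D\to D$ is exactly a bi-$\I$-invariant bijection onto an arbitrary $C\in H(\I)$ with $C\subseteq D$. The composition $g\circ f\colon\omega\to C$ is then a bi-$\I$-invariant bijection, so $\I$-convergence of $(x_{g(f(n))})_{n\in\omega}$ to $x$ is equivalent to $\{m\in C:|x_m-x|\geq\varepsilon\}\in\I$ for all $\varepsilon>0$. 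Hence $(C5)$ says precisely: whenever for every $D\in H(\I)$ there exists $C\in H(\I)$ with $C\subseteq D$ and $\{m\in C:|x_m-x|\geq\varepsilon\}\in\I$ for every $\varepsilon>0$, the sequence $(x_n)$ is $\I$-convergent to $x$.

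For (a)$\Rightarrow$(b) I would argue contrapositively. Given $A\notin\I\cup\I^\star$ witnessing the failure of (b), take $x_n=1$ if $n\in A$ and $x_n=0$ otherwise, with target $x=0$. Since $\{n:|x_n|\geq 1/2\}=A\notin\I$, the sequence is not $\I$-convergent to $0$; on the other hand, for any $D\in H(\I)$ the failure of (b) applied with $B=D$ produces $C\in H(\I)$, $C\subseteq D$, with $A\cap C\in\I$, and this yields exactly the rewritten hypothesis of $(C5)$ (the relevant level sets equal either $A\cap C$ or $\emptyset$). So $(C5)$ fails.

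For (b)$\Rightarrow$(a), assume (b) and let $(x_n)$ with target $x$ satisfy the rewritten hypothesis. Suppose towards a contradiction that $(x_n)$ is not $\I$-convergent to $x$ and fix $\varepsilon_0>0$ with $A:=\{n:|x_n-x|\geq\varepsilon_0\}\notin\I$. If $A\in\I^\star$, applying the hypothesis with $D=\omega$ yields $C\in H(\I)$ with $A\cap C\in\I$; but $A^c\in\I$ forces $C\cap A^c\in\I$, while $C\notin\I$ by Proposition \ref{zawierania}(a), so $A\cap C\notin\I$, a contradiction. If instead $A\notin\I\cup\I^\star$, condition (b) supplies some $B\in H(\I)$ such that $A\cap C\notin\I$ for every $C\in H(\I)$ with $C\subseteq B$, which directly contradicts the rewritten hypothesis applied with $D=B$ and $\varepsilon=\varepsilon_0$. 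Hence $(x_n)$ must be $\I$-convergent to $x$.

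The main hurdle is really the translation step at the start: one must see that the nested quantifier over $f$ and then over $g$ in $(C5)$ collapses into a single pigeonhole-style clause about pairs $C\subseteq D$ in $H(\I)$. Once that is done, both directions are short — one by testing $(C5)$ on a characteristic function, the other by a two-case analysis according to whether the obstructing level set lies in $\I^\star$.
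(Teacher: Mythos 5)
Your proof is correct and follows essentially the same route as the paper's: translate the quantifiers over bi-$\I$-invariant injections into quantifiers over sets $C\subseteq D$ in $H(\I)$, test $(C5)$ on the characteristic sequence of a bad $A$ for (a)$\Rightarrow$(b), and derive a contradiction from (b) for the converse. Your explicit handling of the case $A\in\I^\star$ justifies a step the paper merely asserts.
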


\begin{proof}
\textbf{(a) $\Rightarrow$ (b): }Suppose that there is $A\not\in\I\cup \I^\star$ such that for each $B\in H(\I)$ there is $C\in H(\I)$ with $C\subseteq B$ and $A\cap C\in \I$. Define 
$$x_n=\left\{\begin{array}{ll}
1, & \textrm{if } n\in A,\\
0, & \textrm{if } n\in \omega\setminus A.
\end{array}\right.$$
Then clearly $(x_n)_{n\in\omega}$ is not $\I$-convergent. On the other hand, let $f\colon\omega \to \omega$ be any bi-$\I$-invariant function and denote $B=f[\omega]$. By our assumption there is $C\in H(\I)$ with $C\subseteq B$ and $A\cap C\in \I$. Let $g\colon B \to B$ be a bi-$\I$-invariant function such that $g[B]=C$. Observe that $(x_{g(f(n))})_{n\in\omega}$ is $\I$-convergent to $0$, since $A\cap C\in \I$. A contradiction with the assumption that $\I$ satisfies condition $(C5)$.

\textbf{(b) $\Rightarrow$ (a): }Suppose that $\I$ does not satisfy condition $(C5)$ and let $(x_n)_{n\in\omega}$ be such a sequence of reals that for any bi-$\I$-invariant $f\colon\omega \to \omega$
there is a bi-$\I$-invariant $g\colon f[\omega] \to f[\omega]$ such that $(x_{g(f(n))})_{n\in\omega}$ is $\I$-convergent to $x$, but it is not $\I$-convergent to $x$. Then there is an $\epsilon>0$ such that $A=\{n\in\omega:\ |x_n-x|>\epsilon\}\not\in\I\cup\I^\star$. 

By our assumption there is $B\in H(\I)$ such that for all $C\in H(\I)$ with $C\subseteq B$ we have $A\cap C \not\in \I$. Let $f\colon\omega \to \omega$ be a bi-$\I$-invariant function witnessing that $B\in H(\I)$, i.e., $f[\omega]=B$. Then for every bi-$\I$-invariant $g\colon B \to B$ we have $g[B] \cap A\not\in\I$. Thus, $\{n\in g[B]:\ |x_n-x| >\epsilon  \}  \not\in\I$. Therefore, $(x_{g(f(n))})_{n\in\omega}$ is not $\I$-convergent to $x$. A contradiction. Hence, $\I$ satisfies condition $(C5)$.
\end{proof}

\begin{remark}
Note that all homogeneous and anti-homogeneous ideals satisfy condition $(C5)$.
\end{remark}

We will show that the ideal $\I_d$ does not satisfy condition $(C5)$. We need the following fact.

\begin{theorem}
\label{Id}
Let $A\in H(\I_d)$ and $\{a_0,a_1\ldots\}$ be an increasing enumeration of $A$. Then the function $f\colon\omega\to A$ given by $f_A(n)=a_n$ witnesses that $\I_d|A\cong\I_d$.
\end{theorem}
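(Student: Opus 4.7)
The plan is to reduce the claim to Theorem \ref{IdI1n}. First, since $f_A[X]\subseteq A$ for every $X\subseteq\omega$, the conditions $f_A[X]\in\I_d|A$ and $f_A[X]\in\I_d$ coincide, so $f_A$ witnesses $\I_d|A\cong\I_d$ if and only if $f_A\colon\omega\to\omega$ is bi-$\I_d$-invariant. Applying Theorem \ref{IdI1n} to the increasing injection $f_A$, bi-$\I_d$-invariance of $f_A$ is in turn equivalent to $\underline{d}(f_A[\omega])=\underline{d}(A)>0$. Hence the whole statement collapses to the implication
$$A\in H(\I_d)\ \Longrightarrow\ \underline{d}(A)>0,$$
and this is the only thing I would actually establish.

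For that implication, fix $A\in H(\I_d)$ and invoke the Remark relating homogeneity to bi-invariance: there exists a bi-$\I_d$-invariant injection $h\colon\omega\to\omega$ with $h[\omega]=A$. Suppose, toward a contradiction, that $\underline{d}(A)=0$, and pick $n_k\uparrow\infty$ with $|A\cap n_k|/n_k\to 0$. Since $A\in H(\I_d)\subseteq\I_d^+$ by Proposition \ref{zawierania}(a), we have $\overline{d}(A)>0$, so after thinning I may also choose scales $m_k$ interlaced with the $n_k$ at which $|A\cap m_k|/m_k\geq\delta$ for a fixed $\delta>0$. The aim is then to build $X\subseteq\omega$ witnessing the failure of bi-$\I_d$-invariance of $h$: I select points from the preimages $h^{-1}\bigl[A\cap[n_k,m_k)\bigr]$ so that at each $m_k$ the image $h[X]$ captures a positive fraction of $A\cap m_k$, forcing $\overline{d}(h[X])>0$, while the size budget $|h^{-1}[A\cap n_k]|=|A\cap n_k|=o(n_k)$, together with a greedy choice of how much to take at each stage, keeps $|X\cap N|/N\to 0$ and hence $X\in\I_d$. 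This contradicts bi-$\I_d$-invariance of $h$ and yields $\underline{d}(A)>0$.

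The hard part is precisely that construction. Because $h$ is assumed only bi-$\I_d$-invariant and need not respect the ordering of $\omega$, the preimages $h^{-1}[A\cap n]$ are arbitrary finite subsets of $\omega$ of the prescribed cardinality, so the simultaneous control of the upper densities of $X$ and of $h[X]$ must be drawn purely from the $\I_d$-transport property of $h$ combined with the lopsided density profile of $A$ (sparse at the $n_k$'s, dense at the $m_k$'s). Once $\underline{d}(A)>0$ is in hand, the reduction at the start of the plan completes the proof.
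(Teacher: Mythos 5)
Your opening reduction is correct and clean: since $f_A[X]\subseteq A$, membership of $f_A[X]$ in $\I_d|A$ and in $\I_d$ coincide, so $f_A$ witnesses $\I_d|A\cong\I_d$ exactly when $f_A$ is bi-$\I_d$-invariant, and Theorem \ref{IdI1n} converts this into $\underline{d}(A)>0$. Thus the theorem is equivalent (given Theorem \ref{IdI1n}) to the implication $A\in H(\I_d)\Rightarrow\underline{d}(A)>0$. This inverts the paper's order of deduction: the paper proves the theorem by playing an arbitrary isomorphism $g\colon\omega\to A$ against the increasing enumeration on dyadic blocks, and only afterwards extracts $H(\I_d)=\{B:\underline{d}(B)>0\}$ as Corollary \ref{Idwn}. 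Your route would be a legitimate alternative — if the implication were actually proved.

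It is not, and the direction in which you aim the construction hides the real obstacle. To get $\overline{d}(h[X])>0$ you need $|h[X]\cap m_k|=|X\cap h^{-1}[[0,m_k)]|\geq\delta' m_k$ for infinitely many $k$; that is, $X$ must contain at least $\delta' m_k$ elements of the set $h^{-1}[[0,m_k)]$, which has only $|A\cap m_k|\approx\delta m_k$ elements. If those elements all lie in, say, $[0,2m_k)$ — and nothing in your argument rules this out, since $h$ need not respect order — then $|X\cap 2m_k|\geq\delta' m_k$ forces $\overline{d}(X)>0$, so the two demands on $X$ (density zero, yet capturing a positive fraction of $A\cap m_k$ in the image) are incompatible. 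The ``size budget'' $|h^{-1}[A\cap n_k]|=o(n_k)$ you invoke lives at the sparse scales and gives no control over the preimage of the dense part $A\cap[0,m_k)$, which is where all the mass you need must come from. So the core of the proof is missing, as you yourself concede.

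The implication is true and your overall strategy is salvageable, but the contradiction is found in the opposite direction, using only the sparse scales. Set $L_k=2|A\cap n_k|$ and $D_k=\{x<L_k:\ h(x)\geq n_k\}$. Since $h[[0,L_k)]$ consists of $L_k$ elements of $A$ and only $|A\cap n_k|=L_k/2$ of them can lie below $n_k$, we get $|D_k|\geq L_k/2$, so $X=\bigcup_k D_k$ satisfies $\overline{d}(X)\geq 1/2$, i.e.\ $X\notin\I_d$. On the other hand $h[D_k]\subseteq[n_k,\infty)$ and $|h[D_k]|\leq 2|A\cap n_k|$; choosing the scales so that $|A\cap n_k|\geq 2\sum_{l<k}|A\cap n_l|$ yields, for $N\in(n_j,n_{j+1}]$,
$$|h[X]\cap N|\ \leq\ \sum_{k\leq j}2|A\cap n_k|\ \leq\ 4|A\cap n_j|\ =\ 4\varepsilon_j n_j\ \leq\ 4\varepsilon_j N,$$
so $h[X]\in\I_d$, contradicting bi-invariance of $h$. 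With that lemma in place your reduction does complete the proof, by a genuinely different (and arguably simpler) route than the paper's.
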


\begin{proof}
Suppose that $f(n)=a_n$ is not an isomorphism between $\I_d|A$ and $\I_d$. Let $g\colon\omega \to A$ be such isomorphism. Since every increasing function is $\I_d$-invariant, there is $B\notin\I_d$ such that $f[B]\in\I_d$. Let $\{b_0,b_1,\ldots \}$ be an increasing enumeration of $B$.

We define inductively sets $B_n$ for $n\in\omega$. Let $B_0$ consist elements $b_i^0$ such that 
$b_0^0=\min\{x\in \omega\setminus B :\ x>b_{0}\}$ and
$$b_{i}^0=\min\{x\in \omega\setminus B :\ x>b_{i}\ \wedge\ x>b_{0}^{i-1} \}$$
for all $i>0$. Then $f(B_0)\in\I_d$ as well. Suppose now that $B_0,B_1,\ldots,B_{n-1}$ are already constructed. Let $b_0^n=\min\{x\in \omega\setminus (B \cup B_0 \cup\ldots \cup B_{n-1}) :\ x>b_0^{n-1}\}$ and
$$b_{i}^n=\min\{x\in \omega\setminus (B \cup B_0 \cup\ldots \cup B_{n-1}) :\ x>b_i^{n-1}\ \wedge\ x>b_{i-1}^n \}$$
for all $i>0$. Let $B_n$ consist of all $b_i^n$ for $i\in\omega$. It is easy to see that $f[B_n]\in\I_d$.

Define intervals $I_n=\{2^{2n},2^{2n}+1,\ldots,2^{2n+2}-1\}$ for all $n\in\omega$. Since $B\notin\I_d$, there are $M>0$ and infinitely many such $n\in\omega$ that $\frac{|B \cap I_n|}{|I_n|}>\frac{1}{M}$. Therefore, there is some $k\in\omega$ such that $I_n\subseteq B\cup B_0\cup\ldots\cup B_k$ for infinitely many $n\in\omega$ (not necessarily the same as above). Let $C$ be the union of those $I_n$'s and define:
\begin{eqnarray}
C^= & = & \bigcup_{n\in\omega}\{ c\in C:\ c \in I_n\ \wedge\ f(\min I_n)\leq g(c)\leq f(\max I_n)\}, \nonumber \\
C^+ & = & \bigcup_{n\in\omega}\{ c\in C:\ c \in I_n\ \wedge\ g(c) > f(\max I_n)\}, \nonumber \\
C^- & = & \bigcup_{n\in\omega}\{ c\in C:\ c \in I_n\ \wedge\ g(c) < f(\min I_n)\}. \nonumber 
\end{eqnarray}

Observe that $f[C]\in\I_d$. Hence, $g[C^=]$ belongs to $\I_d$. Moreover, $|g[C^+]\cap n| < |f[C]\cap n| $, so $g[C^+]$ belongs to $\I_d$ as well. Thus, $C^+\cup C^=\in \I_d$.

On the other hand, if $C\supseteq I_n $ for some $n$ then $\frac{|(C\setminus C^-) \cap I_n|}{|I_n|}\geq 2/3 $, since $\frac{|C^-\cap I_n|}{|I_n|}\leq 1/3$. Thus, $C\setminus C^-\not\in \I_d$. A contradiction.
\end{proof}

\begin{corollary}
\label{Idwn}
For any $B\subset\omega$ we have $B\in H(\I_d)$ if and only if $\underline{d}(B)>0$.
\end{corollary}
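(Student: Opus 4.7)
The plan is to package this as a direct corollary of Theorem \ref{Id} together with Theorem \ref{IdI1n}, since both directions reduce to recognizing that the increasing enumeration of $B$, viewed as an injection $\omega \to \omega$, is the correct witness in each case.

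For the forward direction, suppose $B \in H(\I_d)$. Note that $B$ must be infinite, as otherwise $\I_d|B = \mathcal{P}(B)$, which is not isomorphic to $\I_d$. Let $f_B\colon\omega \to \omega$ be the increasing enumeration of $B$, i.e.\ $f_B(n) = b_n$ where $B = \{b_0 < b_1 < \ldots\}$. Theorem \ref{Id} asserts precisely that $f_B$ witnesses $\I_d|B \cong \I_d$, hence $f_B$ is a bi-$\I_d$-invariant (increasing) injection with $f_B[\omega] = B$. Applying Theorem \ref{IdI1n} to $f_B$, the equivalence of being bi-$\I_d$-invariant with positive lower density of the image yields $\underline{d}(B) > 0$.

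For the reverse direction, suppose $\underline{d}(B) > 0$. In particular $B$ is infinite, so let $f_B\colon\omega\to\omega$ be its increasing enumeration. Since $\underline{d}(f_B[\omega]) = \underline{d}(B) > 0$, Theorem \ref{IdI1n} tells us that $f_B$ is bi-$\I_d$-invariant, meaning $X \in \I_d \Leftrightarrow f_B^{-1}[X] \in \I_d$ for every $X \subseteq \omega$. Because $f_B$ is a bijection between $\omega$ and $B$, this is exactly the statement that $f_B$ witnesses $\I_d|B \cong \I_d$, so $B \in H(\I_d)$.

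There is essentially no obstacle here beyond invoking the two previous results correctly; the entire content lies in the observation that Theorem \ref{Id} rigidifies the choice of isomorphism to the canonical increasing enumeration, at which point Theorem \ref{IdI1n}'s density characterization of bi-$\I_d$-invariant increasing injections immediately translates membership in $H(\I_d)$ into positive lower density.
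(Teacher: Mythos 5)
Your proof is correct and follows exactly the paper's route: the paper's own proof is just the one-line citation of Theorems \ref{Id} and \ref{IdI1n}, and your argument fills in precisely the intended details (the increasing enumeration is the canonical witness by Theorem \ref{Id}, and Theorem \ref{IdI1n} converts bi-$\I_d$-invariance into positive lower density of the image).
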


\begin{proof}
It follows from Theorems \ref{Id} and \ref{IdI1n}.
\end{proof}

\begin{problem}
Characterize ideals $\I$ such that for any $A\in H(\I)$ the function $f_A\colon\omega\to A$ given by $f_A(n)=a_n$, where $\{a_0,a_1\ldots\}$ is an increasing enumeration of $A$, witnesses that $\I|A\cong\I$.
\end{problem}

\begin{proposition}
Ideal $\I_d$ does not satisfy condition $(C5)$.
\end{proposition}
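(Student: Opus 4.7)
The plan is to apply the preceding proposition characterizing $(C5)$: it suffices to exhibit $A\notin\I_d\cup\I_d^\star$ such that for every $B\in H(\I_d)$ there is $C\in H(\I_d)$ with $C\subseteq B$ and $A\cap C\in\I_d$. For such an $A$ I shall simply take $C=B\cap A^c$, so the real task is to construct $A$ and then prove $\underline{d}(B\cap A^c)>0$ whenever $\underline{d}(B)>0$; the latter will give $C\in H(\I_d)$ by Corollary~\ref{Idwn}.

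I construct $A$ as a union of rapidly spaced intervals: pick integers $a_0<b_0<a_1<b_1<\cdots$ satisfying $b_k=2a_k$ and $a_{k+1}/b_k\to\infty$ (for instance $a_{k+1}=(k+2)b_k$), and set $A=\bigcup_{k}[a_k,b_k]$. Since $|A\cap b_k|=\sum_{j\le k}(a_j+1)\sim a_k$ we obtain $\overline{d}(A)=1/2$, and since $|A\cap a_{k+1}|\sim a_k$ with $a_{k+1}/a_k\to\infty$ we obtain $\underline{d}(A)=0$; hence $A\notin\I_d\cup\I_d^\star$.

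Fix $B\in H(\I_d)$, set $c=\underline{d}(B)>0$, and let $X=B\cap A^c$. Since $A\cap X=\emptyset\in\I_d$ it remains to show $\underline{d}(X)>0$. I combine two lower bounds on $|X\cap n|$. First, because $|A\cap a_k|=o(a_k)$, evaluation at $n=a_k$ gives $|X\cap a_k|\ge ca_k-o(a_k)$, and monotonicity of the counting function then yields $|X\cap n|\ge ca_k-o(a_k)$ for every $n\ge a_k$. Second, the direct bound $|X\cap n|\ge cn-|A\cap n|-o(n)$ simplifies on the gap $[b_k,a_{k+1})$ -- where $|A\cap n|\le a_k+o(a_k)$ -- to $|X\cap n|/n\ge c-a_k/n-o(1)$. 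A brief case analysis on $n\in[a_k,a_{k+1})$ shows that on $[a_k,(c+1)a_k/c]$ the first bound gives $|X\cap n|/n\ge c^2/(c+1)-o(1)$, and on $[(c+1)a_k/c,a_{k+1})$ the second bound gives the same estimate; the two meet precisely at $n=(c+1)a_k/c$. Hence $\underline{d}(X)\ge c^2/(c+1)>0$.

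The main obstacle is that the standard subadditive inequality $\underline{d}(B\cap A^c)\ge\underline{d}(B)-\overline{d}(A)$ is empty whenever $c\le\overline{d}(A)=1/2$, so one cannot obtain $\underline{d}(X)>0$ directly from $\underline{d}(B)$ and $\overline{d}(A)$ alone. The point is that the super-geometric spacing $a_{k+1}/a_k\to\infty$ makes $A$ asymptotically negligible at the subsequence $n=a_k$, which pins $|X\cap a_k|\sim ca_k$; monotonicity of the counting function then propagates this into a uniform positive lower bound on $|X\cap n|/n$ throughout $[a_k,a_{k+1})$ even at the positions $n\sim b_k$ where $|A\cap n|/n$ approaches $1/2$.
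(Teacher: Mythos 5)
Your proof is correct, and in outline it follows the same strategy as the paper's: both take $C=B\setminus A$ for a suitable $A\notin\I_d\cup\I_d^\star$ with $\underline{d}(A)=0$ and reduce everything to Corollary~\ref{Idwn}. The difference lies in how $\underline{d}(B\setminus A)>0$ is justified, and here your extra care is not optional. The paper picks an \emph{arbitrary} $A\notin\I_d$ with $\underline{d}(A)=0$ and invokes the inequality $\underline{d}(B)\le\underline{d}(A)+\underline{d}(B\setminus A)$; as you essentially point out, no such inequality is available (lower density is superadditive, not subadditive, on disjoint unions, and the valid subadditive bound $\underline{d}(B)\le\overline{d}(A)+\underline{d}(B\setminus A)$ is useless here since $\overline{d}(A)>0$). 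Indeed the paper's conclusion genuinely fails for some sets meeting its hypotheses: if $A=\bigcup_k[a_k,ka_k]$ with $a_{k+1}\gg ka_k$ and $B$ is the set of even numbers, then $\underline{d}(A)=0$, $A\notin\I_d$ and $B\in H(\I_d)$, yet $|(B\setminus A)\cap ka_k|\sim a_k/2$, so $\underline{d}(B\setminus A)=0$. Your choice of $A$ with blocks of \emph{bounded} relative length ($b_k=2a_k$) separated by super-geometric gaps, combined with the two-regime estimate (monotonicity of the counting function on $[a_k,(c+1)a_k/c]$, the direct bound on the gap, with the two bounds meeting at $n=(c+1)a_k/c\ge b_k$), is exactly what makes the argument go through, and the resulting bound $\underline{d}(B\setminus A)\ge c^2/(c+1)>0$ is correct. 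So your proposal is not merely a more detailed variant: it supplies a step that the paper's own proof is missing.
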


\begin{proof}
Pick $A\not\in\I_d$ such that $\underline{d}(A)=0$. For every $B\in H(\I_d)$ we have $\underline{d}(B\setminus A)>0$, since $\underline{d}(B)>0$ (by Corollary \ref{Idwn}) and $\underline{d}(B)\leq \underline{d}(A)+\underline{d}(B\setminus A)$. Thus, there is a set $C=B\setminus A\subseteq B$ which is in $H(\I_d)$ (by Corollary \ref{Idwn}) while $C\cap A=\emptyset\in\I_d$.
\end{proof}

\end{document}